\newtheorem{theorem}{Theorem}[section]
\newtheorem{lemma}[theorem]{Lemma}
\newtheorem{assup}[theorem]{Assumptions}
\newtheorem{algo}[theorem]{Algorithm}
\theoremstyle{definition}
\newtheorem{definition}[theorem]{Definition}
\newtheorem{example}[theorem]{Example}
\theoremstyle{remark}
\newtheorem{remark}[theorem]{Remark}
\numberwithin{equation}{section}
\title[Proximal and Contraction method with Relaxed Inertial and Correction Terms...]
{Proximal and Contraction method with Relaxed Inertial and Correction Terms for Solving Mixed Variational Inequality Problems}
\author[\hspace{10.00cm}CE Nwakpa \textit{et al. ...}]{Chidi Elijah Nwakpa$^{1},$ Austine Efut Ofem$^{2}$, Kalu Okam Okorie$^{3},$ Chinedu Izuchukwu$^{4},$   Chibueze Christian Okeke$^{5}.$}
\address{$^{1,3,4,5}$School of Mathematics, University of the Witwatersrand, Private Bag 3, Johannesburg 2050, South Africa.}
\address{$^{2}$School of Mathematics, Statistics and Computer Sciences, University of KwaZulu-Natal, Durban, South Africa.}
\email{$^{1},$2713235@students.wits.ac.za, chidinecsc2n@gmail.com}
\email{$^{2},$ofemaustine@gmail.com}
\email{$^{3},$2710937@students.wits.ac.za}
\email{$^{4},$chinedu.izuchukwu@wits.ac.za}
\email{$^{5},$chibueze.okeke87@yahoo.com, chibueze.okeke@wits.ac.za}
\thanks{Corresponding author: Chibueze Christian Okeke.}
\begin{document}
	\keywords{Mixed variational inequalities; weak convergence; convex function; proximal and contraction, generalized monotonicity condition.\\
		{\rm 2000} {\it Mathematics Subject Classification}: 47H09; 47H10; 49J53; 90C25}
		
	\begin{abstract}\noindent\\
	We propose in this paper a proximal and contraction method for solving a convex mixed variational inequality problem in a real Hilbert space. To accelerate the convergence of our proposed method, we incorporate an inertial extrapolation term, two correction terms, and a relaxation technique. We therefore obtain a weak convergence result under some mild assumptions. Finally, we present numerical examples to practically demonstrate the effectiveness of the relaxation technique, the inertial extrapolation term, and the correction terms in our proposed method.
	\end{abstract}
	
	\maketitle
	\section{Introduction}
	\noindent The theory of mixed variational inequalities in its various forms serves as a robust mathematical framework for modeling a wide range of phenomena in numerous research fields. The mixed variational inequality (briefly MVI) provides a unifying approach for modeling and finding approximate solutions to a wide range of linear and nonlinear operator problems encountered across numerous fields such as medical, engineering, and other science fields. In these fields, several mathematical problems formulated as Mixed variational inequality problems (MVIPs) have been applied to image restoration, electronics, fluid flow through media, financial analysis, decision making, ecology, and others \cite{Goeleven1,Goeleven2,Han,Konnov}.

	\noindent Given a nonempty convex set $\mathcal{C}$ in a Hilbert space $\mathcal{H},$ let $\langle\cdot , \cdot\rangle$ denote the inner product and  $\| \cdot\|$ its corresponding norm operator. Let $\mathcal{T}:\mathcal{H}\longrightarrow\mathcal{H}$ be a linear or nonlinear mapping and $g:\mathcal{C}\longrightarrow\overline{\mathbb{R}} = [-\infty, +\infty]$ a real-valued function. We consider the following problem:
	\begin{eqnarray}\label{MVI non}
		\begin{cases}
			\mbox{ Obtain} \ \bar{x}\in\mathcal{C} \ \mbox{such that} \\
			\langle \mathcal{T}\bar{x}, \underline{u}-\bar{x}\rangle+g(\underline{u})-g(\bar{x})\geq0 \ \ \forall \underline{u}\in\mathcal{C}.
		\end{cases}
	\end{eqnarray}
	This problem, described by \eqref{MVI non}, is called the mixed variational inequality problem (MVIP) or, alternatively, a variational inequality of the second kind.
	Let us denote $\Delta(\mathcal{T};g):=\{\bar{x}\in\mathcal{C} ~:~ \langle \mathcal{T}(\bar{x}), \underline{u}-\bar{x}\rangle +g(\underline{u})-g(\bar{x})\geq 0, \ \ \forall \underline{u}\in \mathcal{C}\}$ to be the set of solutions of the MVI\eqref{MVI non}. 
	  It is crucial to note  that the differentiability of the function $g$ in problem \eqref{MVI non} cannot be assumed. This is particularly evident when $g$ corresponds to the indicator function, $\iota_\mathcal{C},$ of a nonempty, closed set  $\mathcal{C}$ of $\mathcal{H}$, as such function often fails to satisfy the necessary regularity conditions for differentiability. In this particular case where 
	  \begin{eqnarray*}
	  	g(\underline{u})\equiv\iota_\mathcal{C}(\underline{u}) = \begin{cases}
	  		0, \ \ \mbox{if} \ \underline{u}\in\mathcal{C},\\
	  		\infty, \ \ \mbox{otherwise,} 
	  	\end{cases}
	  \end{eqnarray*}
	  the MVI\eqref{MVI non} becomes the classical variational inequality problem (VIP) of Stampacchia \cite{STAM}, which is:\\
	  obtain $\bar{x}\in\mathcal{C}$ such that 
	  \begin{eqnarray}\label{VIP1.2}
	  	\langle \mathcal{T}\bar{x}, \underline{u}-\bar{x}\rangle\geq 0 \ \ \mbox{for each} \ \underline{u}\in\mathcal{C}.
	  \end{eqnarray}
	  If $g$ is a proper, convex and  lower-semicontinuous function (its subdifferential $\partial g(\cdot)$ exists), then problem \eqref{MVI non} is equivalent to the problem of finding a zero of two monotone operators, described as follows:\\
	   Find $\bar{x}\in\mathcal{H}$ such that 
	  \begin{eqnarray}
	  	0\in(\mathcal{T}\bar{x}+\partial g(\bar{x})),
	  \end{eqnarray} 
	  where the subdifferential $\partial g$ of the proper, convex and  lower-semicontinuous function $g$ is known to be a maximal monotone operator.
	   Similarly, if $\mathcal{T}\equiv 0,$ MVI\eqref{MVI non} becomes the non-differentiable
	   convex optimization problem of minimizing $g$ over $\mathcal{C},$ defined as (see, \cite{Ricceri}):
	  \begin{eqnarray}
	  	\underset{\underline{u}\in\mathcal{C}}{\min}\{g(\underline{u})+\Phi(\underline{u})\}.
	  \end{eqnarray}
	  
	  \noindent Over the last few years, the MVIP has attracted significant research interest due to its wide range of applications. For example, Konnov and Volotskaya \cite[Section 2]{Konnov} reformulated the problem described in \eqref{MVI non} as an oligopolistic equilibrium model, which is a crucial concept in supply and demand market. Again, Goeleven \cite{Goeleven1,Goeleven2} demonstrated the applicability of the  MVI\eqref{MVI non} in the field of electrical circuits. As MVI generalizes numerous other models, a wide array of methods have been developed by various researchers to find an approximate solution to problem \eqref{MVI non}. Some of the most effective numerical techniques that have emerged over time are proximal methods and extragradient methods, among others (for example, see \cite{Garg,Han,JOLAOSO,Konnov,Nwakpa,Tang zhu}). However, one valuable tool stands out--the proximal operator, whose application has as well been found useful in addressing a wide range of other numerical problems. A key point to note is that the proximal operator is a strongly convex operator and hence, it guarantees the existence of a unique solution. Furthermore, problem \eqref{MVI non} is called a convex MVIP whenever the function $g$ is convex, and it is  best solved using the proximal type methods (see, for instance \cite{Chen,Wang}). 

	\noindent A significant challenge with MVI\eqref{MVI non} is developing efficient iterative algorithms and the analysis of their convergence. The presence of the nonlinear mapping $g$ makes this process considerably more complex for MVIP than for VIP\eqref{VIP1.2}. So, understanding that MVIP extends the concept of VIP, we can gain insight on how to solve MVI\eqref{MVI non} by reviewing solution techniques for VIP\eqref{VIP1.2}, particularly projection-type method.  The most fundamental version of the projection method is the Picard iteration given as:
	\begin{eqnarray*}
		x_{n+1} = \mathcal{P}_\mathcal{C}(x_n - \lambda\mathcal{T}x_n),
	\end{eqnarray*}
    where $\mathcal{P}_\mathcal{C}$ is a projection onto the feasible set $\mathcal{C},$ and $\lambda$ is a positive  real constant. However, this method requires the involved Lipschitz operator to be strongly monotone to guarantee convergence. This condition is quite restrictive. Thus, the extragradient method (EGM) introduced by Korpelevich \cite{Korpelevich} becomes crucial as it overcomes the strong monotonicity condition that the Lipschitz operator has to satisfy to guarantee convergence. This method given as 
    \begin{eqnarray*}
    	\begin{cases}
    		y_n = \mathcal{P}_\mathcal{C}(x_n-\lambda\mathcal{T}x_n),\\
    		x_{n+1} = \mathcal{P}_\mathcal{C}(x_n-\lambda\mathcal{T}y_n),
    	\end{cases}
    \end{eqnarray*}
    for each $n\in\mathbb{N},$ converges if the Lipschitz operator, $\mathcal{T},$ is monotone. However, the EGM came with another computational challenge of evaluating two projections onto the feasible set $\mathcal{C}$ in each iterative step. It is worth mentioning that these projection evaluations onto $\mathcal{C}$ could be computationally expensive especially when the convex set $\mathcal{C}$ is structurally complex.
	
	\noindent Furthermore, the computational drawback posed by the EGM can either be overcome by the subgradient extragradient method (SGEGM) or the projection (proximal in a case $g$ is not the indicator function) and contraction method (PCM). The SGEGM, proposed by Censor \textit{et al.} \cite{Censor Gibali 1,Censor Gibali 2}, replaces one of the projections onto $\mathcal{C}$ with a projection onto a carefully constructed half-space, thereby reducing the number of projection evaluations onto the feasible set $\mathcal{C}$ in every iterative step. On the other hand, the PCM, proposed by He \cite{He},  involves iteratively projecting a point onto a feasible set and then contracting the distance between the current point and its projection, often incorporating inertial techniques for improved convergence. This algorithm is particularly useful when dealing with monotone and Lipschitz continuous operators, although it has been extended to handle more complex scenarios like pseudomonotone operators.
	 The PCM defined as:
	\begin{eqnarray*}
		\begin{cases}
				y_n = \mathcal{P}_\mathcal{C}(x_n-\lambda_n\mathcal{T}x_n),\\
				x_{n+1} = x_n-\gamma\tau_n d(x_n, y_n),\\
				d(x_n, y_n) := x_n-y_n-\lambda_n(\mathcal{T}x_n-\mathcal{T}y_n),\\
				\tau_n = 
				\begin{cases}
					\dfrac{\langle x_n- y_n, d(x_n,y_n)\rangle} {\|d(x_n,y_n)\|^2}, \ \ \mbox{if} \ d(x_n,y_n) \neq 0,\\ \\
					0, \ \ \ \ \ \mbox{otherwise,}
				\end{cases}
				\end{cases}
			\end{eqnarray*}
	where $\gamma\in(0,2), \ \lambda_n\in\left(0, \dfrac{1}{\beta}\right)$ (or, alternatively,  $\lambda_n$ is updated using a self-adaptive rule), and $\beta>0$ is the Lipschitz constant of the monotone operator $\mathcal{T},$ requires only one projection evaluation in each iteration. Thus, owing to this computational advantage, the PCM has become of huge interest to many researchers (for example, see \cite{Alakoya,Cai,Xue Song,Zhang}) who have proposed different PCM for solving different optimization problems.

		\noindent  
	As new methods for solving MVI\eqref{MVI non} and other optimization problems  evolve, researchers are increasingly interested in exploring different techniques to achieve faster convergence of these methods. One approach is to incorporate inertial terms into these methods. The concept of inertial extrapolation terms in algorithms stems from Polyak's work \cite{Polyak} on the Heavy Ball with Friction (HBF), a second-order continuous system. The inertial extrapolation term arose naturally from an implicit time discretization of the HBF system.  A distinctive feature of this algorithm is that it utilizes the previous two iterates to determine the next iterate. Other inertial-type methods can be found in \cite{Dong Cho,Tan Li,Tan Qin} and references quoted in these works.

	\noindent 
    Additionally, another technique for accelerating algorithms for solving optimization problems involves incorporating correction terms. On this note, Kim \cite{Kim} in his work proposed an accelerated proximal point method that combines the proximal point method with inertial and correction terms. Although no weak convergence result was obtained for the generated sequences, Kim \cite[Theorem 4.1]{Kim} leveraged the performance estimation problem (PEP) approach of Drori and Teboulle \cite{Drori} to obtain the worst-case convergence rate $\Big(\|y_n-w_{n-1}\|=\mathcal{O}(n^{-1})\Big)$ for the following algorithm: 
	\begin{eqnarray}
		\begin{cases}
			w_n=y_n+\dfrac{n-1}{n+1}(y_n-y_{n-1})+\dfrac{n-1}{n+1}(w_{n-2}-y_{n-1}),\\
			\\
			y_{n+1}=J^A_\lambda(w_n).
		\end{cases}
	\end{eqnarray}
	In another related work, Maingé \cite{Maingé} further investigated the proximal point method by combining inertial, relaxation, and correction terms, with a focus on solving monotone inclusion problems. He proposed the following method
	\begin{eqnarray}
		\begin{cases}
			w_n=y_n+\alpha_n(y_n-y_{n-1})+\delta_n(w_{n-1}-y_n),\\
			\\
			y_{n+1}=\dfrac{1}{1+\alpha_n}w_n+\dfrac{\alpha_n}{1+\alpha_n}J^A_{\lambda(1+\alpha_n)}(w_n),
		\end{cases}
	\end{eqnarray}
	and established weak convergence results and the fast rate, $\|y_{n+1}-y_n\|=o(n^{-1}),$ for the generated sequence. Motivated by the works of Kim \cite{Kim} and Maingé \cite{Maingé}, Izuchukwu \textit{et al.} \cite{Izuchukwu} proposed another method for solving proximal point problems. Their method involves incorporating two correction terms, yielding an approach that can be described as:
	\begin{eqnarray}
		\begin{cases}
			w_n=y_n+\alpha(y_n-y_{n-1})+\delta(1+\alpha)(w_{n-1}-y_n)-\alpha\delta(w_{n-2}-y_{n-1}),\\
			\\
			y_{n+1} = J^A_\lambda(w_n).
		\end{cases}
	\end{eqnarray}
	In \cite[Theorem 3.5]{Izuchukwu}, they obtained a weak convergence result for their proposed method, while in \cite[Theorem 3.7]{Izuchukwu}, they established a linear convergence rate, both under certain assumptions. Through numerical examples, they further revealed that the incorporation of multiple correction terms significantly accelerates their method, outperforming those in \cite{Kim,Maingé} that rely on a single correction term.
	
	\noindent Building on the works of Dong \textit{et al.} \cite{Dong Cho}, Izuchukwu \textit{et al.} \cite{Izuchukwu}, Kim \cite{Kim}, and Maingé \cite{Maingé}, this work proposes a proximal and contraction method with relaxed inertial and correction terms for solving mixed variational inequality problems. The key contributions of this approach include:
	\begin{itemize}
		\item Combining one inertial term, two correction terms with a relaxation technique to propose a proximal and contraction method for solving MVIPs in a real Hilbert space;
		\item  Incorporating a self-adaptive stepsize, which is distinct from the methods proposed in \cite{Dong Cho,Tang zhu} that use constant stepsizes, and \cite{Alakoya,Cholamjiak Thong} that use the on-line rule and/or the line search rule;
		\item Obtaining a weak convergence result and providing numerical examples to justify the effectiveness of our proposed method.
	\end{itemize}

	 \noindent The rest of the content of this work is arranged as follows:  In Section \textbf{\ref{pre}}, we present some important definitions, lemmas and preliminary
	 results that are subsequently needed in this work. In Section \textbf{\ref{MAIN RESULT}}, we present and discuss our proposed method for solving a convex MVIP and obtain a weak convergence result. Section \textbf{\ref{numerics}} presents some numerical results which serve as practical illustrations of the effectiveness of involving relaxation technique, inertial extrapolation term and two correction terms in our proposed method. 
	 
	\section{Preliminaries}\label{pre}
	\noindent In this section, we give some basic definitions and lemmas that are necessary for the convergence analysis of our proposed system. We shall denote $\overline{\mathbb{R}}:=\mathbb{R}\cup\{-\infty, +\infty\}$ to be the extended real number.\\
	
\begin{definition}
	Let $g:\mathcal{H}\longrightarrow\overline{\mathbb{R}}$ be a real-valued function. Then
	\begin{enumerate}
		\item [(i)] the effective domain of $g$ is defined by $dom~g:=\{\underline{u}\in\mathcal{H} \ \vert \ g(\underline{u})<+\infty\};$
		\item [(ii)] $g$ is said to be a proper function if
		its effective domain is non-empty, i.e., there exists at least one $\underline{u}\in\mathcal{H}$ such that $g(\underline{u})<+\infty$ and
		 if $g(\underline{u})>-\infty$ for any $\underline{u}\in \mathcal{H}.$  Thus, one can also verify that  $g(\underline{u})=+\infty$ for any $\underline{u}\notin dom~g;$
		\item [(iii)] $g$ is known as a convex function if its domain is convex and for any $\underline{u},\underline{v}\in dom~g,$ it follows that
		\begin{eqnarray*}
			g(t\underline{u}+(1-t)\underline{v})\leq t g(\underline{u}) + (1-t)g(\underline{v}), \ \ \ \forall t\in[0,1].
		\end{eqnarray*}
	\end{enumerate}
\end{definition}

	\begin{definition}
		Let $g:\mathcal{C}\subset\mathcal{H}\longrightarrow\overline{\mathbb{R}}$ be a proper lower semi-continuous real-valued function, where $\mathcal{C}$ is a nonempty closed convex set. For every $\underline{u}, \underline{v} \in\mathcal{C},$ the mapping $\mathcal{T} : \mathcal{H}\longrightarrow\mathcal{H}$ is said to be 
		\begin{enumerate}
			\item [(i)] Lipschitz continuous if there is a constant $\beta>0,$ called the modulus (or Lipschitz constant), such that
			\begin{eqnarray*}
				\|\mathcal{T}\underline{u}-\mathcal{T}\underline{v}\|\leq\beta\|\underline{u}-\underline{v}\|;
			\end{eqnarray*} 
			\item [(ii)] monotone if 
			\begin{eqnarray*}
			\langle \mathcal{T}\underline{u}-\mathcal{T}\underline{v}, \underline{u}-\underline{v}\rangle\geq0;	
			\end{eqnarray*}
			\item [(iii)] strongly monotone if there is a constant $\kappa>0$ such that
			\begin{eqnarray*}
				\langle \mathcal{T}\underline{u}-\mathcal{T}\underline{v}, \underline{u}-\underline{v}\rangle\geq\kappa\|\underline{u}-\underline{v}\|^2;
			\end{eqnarray*}
			\item [(iv)] pseudomonotone if  
			\begin{eqnarray*}
				\langle\mathcal{T}\underline{u}, \underline{v}-\underline{u}\rangle\geq0 \implies \langle\mathcal{T}\underline{v}, \underline{v}-\underline{u}\rangle\geq0;
			\end{eqnarray*}
			\item [(v)] $g-$pseudomonotone on $\mathcal{C},$ if 
			\begin{eqnarray*}
				\langle\mathcal{T}\underline{u}, \underline{v}-\underline{u}\rangle +g(\underline{v}) - g(\underline{u})\geq0 \implies \langle\mathcal{T}\underline{v}, \underline{v}-\underline{u}\rangle +g(\underline{v}) - g(\underline{u})\geq0 \ \  \mbox{(See \cite{JOLAOSO})};
			\end{eqnarray*}
		\end{enumerate}
	\end{definition}
	\begin{remark}\noindent
		\begin{enumerate}
			\item One can easily verify that $\kappa\leq\beta$ if $\mathcal{T}$ is $\kappa-$strongly monotone and $\beta-$Lipschitz continuous on $\mathcal{C}.$
			\item Note that the $g-$pseudomonotonicity of $\mathcal{T}$ on the $dom~g$ does not necessary mean that $\mathcal{T}$ is pseudomonotone. This can be easily checked using Example \ref{ex1} below. However, if $g=\iota_\mathcal{C},$ then the $0-$pseudomonotonicity of $\mathcal{T}$ corresponds to the pseudomonotonicity condition of $\mathcal{T}.$
		\end{enumerate}
	\end{remark}
	\begin{example}\label{ex1}
		Let $\mathcal{C}=[3,5].$ Define $g:\mathcal{C}\longrightarrow\mathbb{R}$ and $\mathcal{T}:\mathbb{R}\longrightarrow\mathbb{R}$ by\\
		\begin{eqnarray*}
			g(\underline{u})=
			\begin{cases}
				\underline{u}^2, \ \ \mbox{if} \ \underline{u}\in\mathcal{C},\\
				+\infty, \ \ \mbox{otherwise}
			\end{cases}
		\end{eqnarray*}
		and 
		$$\mathcal{T}(\underline{u})=4-\underline{u} \  \ \ \ \forall \underline{u}\in\mathbb{R}.$$
		Let the inner product function $\langle  \cdot, \cdot\rangle$ on $\mathbb{R}$ be defined by $\langle \underline{u},\underline{v}\rangle =\underline{u} \cdot\underline{v}$ for each $\underline{u},\underline{v}\in\mathbb{R}.$ Consider the MVI of finding $\bar{x}\in dom \ g$ such that
		$$\langle \mathcal{T}\underline{u}, \underline{v}-\underline{u}\rangle+g(\underline{v})-g(\underline{u})\geq0, \ \ \forall \underline{v}\in dom~g.$$ \\
		In what that follows, we show that B$_1-$B$_3$ of Assumption \ref{ASSUMproxcon} below are satisfied, thus:
		\begin{enumerate}
			\item It can easily be verified that the solution set $\Delta(\mathcal{T};g)$ is nonempty. In fact, $\bar{x}=3$ is the unique solution of the MVI.
			\item It is clear that $\mathcal{T}$ is $1-$Lipschitz continuous. It can also be readily observed that the mapping $\mathcal{T}$ is sequentially weakly continuous.
			\item Clearly, the function $g$ is proper and convex. In addition, since $|g(\underline{u})-g(\underline{v})| = |\underline{u}^2-\underline{v}^2|\leq|\underline{u}+\underline{v}||\underline{u}-\underline{v}|\leq10|\underline{u}-\underline{v}|,$ thus, $g$ is Lipschitz continuous with constant $10.$ Now, we show that the $g-$pseudomonotonicity of $\mathcal{T}$ does not necessarily guarantee the pseudomonotonicity of $\mathcal{T}.$ \\
			Suppose 
			$$\langle \mathcal{T}\underline{u}, \underline{v}-\underline{u}\rangle+g(\underline{v})-g(\underline{u})\geq0, \ \ \forall \underline{v}\in [3,5].$$
			That is,
			$$(\underline{v}-\underline{u})(4+\underline{v})\geq0.$$
			But $4+\underline{v}>0,$ since $\underline{v}\in[3,5].$ This further implies that $(\underline{v}-\underline{u})\geq0.$ Hence, we have that 
			\begin{eqnarray*}
				\langle \mathcal{T}\underline{v}, \underline{v}-\underline{u}\rangle+g(\underline{v})-g(\underline{u})
				&=& (\underline{v}-\underline{u})(4+\underline{u})\\
				&\geq&0,
			\end{eqnarray*} 
			which shows that $\mathcal{T}$ is $g-$pseudomonotone on $dom~g.$ However, if we take $\underline{u} = 3$ and $\underline{v} = 5$ then, it follows that $\langle \mathcal{T}\underline{u}, \underline{v}-\underline{u}\rangle = 2>0,$ and $\langle \mathcal{T}\underline{v}, \underline{v}-\underline{u}\rangle = -2<0,$ showing that $\mathcal{T}$ is not pseudomonotone.
		\end{enumerate}
	\end{example}

\begin{lemma}\label{LM con1}
	The following hold for every $\underline{u},\underline{v}\in \mathcal{H}$ and $\sigma \in \mathbb{R}:$
	\begin{enumerate}
		\item [(a)]
		 $\|(1+\sigma) \underline{u} -\sigma\underline{v}\|^2 = (1+\sigma)\|\underline{u}\|^2 - \sigma\|\underline{v}\|^2 + \sigma(1+\sigma)\|\underline{u}-\underline{v}\|^2;$
		 \item [(b)] $\langle \underline{u}- \underline{w}, \underline{v}-\underline{u}\rangle=\dfrac{1}{2}\|\underline{w}-\underline{v}\|^2-\dfrac{1}{2}\|\underline{u}-\underline{w}\|^2-\dfrac{1}{2}\|\underline{u}-\underline{v}\|^2.$ 
		 	\end{enumerate}
\end{lemma}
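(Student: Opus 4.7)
Both identities are standard Hilbert-space computations that follow purely from the bilinearity and symmetry of the inner product, together with the defining identity $\|\underline{u}-\underline{v}\|^2=\|\underline{u}\|^2-2\langle \underline{u},\underline{v}\rangle+\|\underline{v}\|^2$. My plan is to prove each part by expanding both sides into inner-product form and matching coefficients, and I do not expect any genuine obstacle.

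For part (a), I would first expand the left-hand side directly:
\begin{equation*}
\|(1+\sigma)\underline{u}-\sigma\underline{v}\|^2=(1+\sigma)^2\|\underline{u}\|^2-2\sigma(1+\sigma)\langle \underline{u},\underline{v}\rangle+\sigma^2\|\underline{v}\|^2.
\end{equation*}
Then I would expand the right-hand side by substituting $\|\underline{u}-\underline{v}\|^2=\|\underline{u}\|^2-2\langle \underline{u},\underline{v}\rangle+\|\underline{v}\|^2$ into $(1+\sigma)\|\underline{u}\|^2-\sigma\|\underline{v}\|^2+\sigma(1+\sigma)\|\underline{u}-\underline{v}\|^2$ and collect terms. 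The coefficient of $\|\underline{u}\|^2$ becomes $(1+\sigma)+\sigma(1+\sigma)=(1+\sigma)^2$, the coefficient of $\|\underline{v}\|^2$ becomes $-\sigma+\sigma(1+\sigma)=\sigma^2$, and the cross term is exactly $-2\sigma(1+\sigma)\langle \underline{u},\underline{v}\rangle$, matching the left-hand side.

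For part (b), I would expand the left-hand side using bilinearity to get
\begin{equation*}
\langle \underline{u}-\underline{w},\underline{v}-\underline{u}\rangle=\langle \underline{u},\underline{v}\rangle-\|\underline{u}\|^2-\langle \underline{w},\underline{v}\rangle+\langle \underline{w},\underline{u}\rangle,
\end{equation*}
and then expand each of the three squared norms on the right-hand side via the standard identity. The terms $\tfrac12\|\underline{w}\|^2$ and $\tfrac12\|\underline{v}\|^2$ cancel between the first and third (respectively second) summands, leaving exactly $-\|\underline{u}\|^2-\langle \underline{w},\underline{v}\rangle+\langle \underline{u},\underline{w}\rangle+\langle \underline{u},\underline{v}\rangle$, which coincides with the expansion of the left-hand side.

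Since both parts are purely algebraic verifications in a Hilbert space that do not depend on any structural hypothesis on $\mathcal{C}$, $\mathcal{T}$, or $g$, the only care needed is to keep the bookkeeping of coefficients straight; there is no conceptually difficult step, and the argument can be presented compactly in a few lines per item.
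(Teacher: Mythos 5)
Your proof is correct: both expansions are accurate, the coefficient bookkeeping in (a) ($(1+\sigma)+\sigma(1+\sigma)=(1+\sigma)^2$ and $-\sigma+\sigma(1+\sigma)=\sigma^2$) checks out, and the cancellations in (b) yield exactly the inner-product expansion of the left-hand side. The paper states this lemma without proof as a standard identity, and your direct verification via bilinearity of the inner product is precisely the standard argument one would supply.
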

\begin{lemma}\cite{BAUS}\label{prox lemma}
	For every $\underline{u}\in\mathcal{H},$ $\underline{v}\in\mathcal{C},$ and $\lambda>0$ a scalar, the following inequality holds\\
	$$\lambda(g(\underline{v})-g(prox_{\lambda g}(\underline{u})))\geq \langle \underline{u}- prox_{\lambda g}(\underline{u}), \underline{v}-prox_{\lambda g}(\underline{u})\rangle,$$ \\
	where $prox_{\lambda g}(\underline{u}) := \underset{\underline{v}\in\mathcal{C}}{argmin}\left\{\lambda g(\underline{v})+\frac{\|\underline{u}-\underline{v}\|^2}{2}\right\}.$
\end{lemma}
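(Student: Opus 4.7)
The plan is to exploit the variational characterization of the proximal operator through a perturbation argument along a line segment inside $\mathcal{C}$. Writing $p := prox_{\lambda g}(\underline{u})$ for brevity, the point $p$ is by construction the unique minimizer over $\mathcal{C}$ of the proper, strongly convex, lower semi-continuous functional $F(\underline{w}) := \lambda g(\underline{w}) + \tfrac{1}{2}\|\underline{u}-\underline{w}\|^2$; the strong convexity coming from the quadratic term guarantees both existence and uniqueness of $p$, so the notation is meaningful.

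I would next fix an arbitrary $\underline{v}\in\mathcal{C}$ and $t\in(0,1)$, and form the convex combination $p_t := (1-t)p + t\underline{v} = p + t(\underline{v}-p)$, which lies in $\mathcal{C}$ by convexity of $\mathcal{C}$. Since $p$ minimizes $F$ over $\mathcal{C}$, one has $F(p)\leq F(p_t)$. Bounding $g(p_t)\leq (1-t)g(p) + tg(\underline{v})$ by convexity of $g$, and expanding the square
\[\|\underline{u}-p_t\|^2 = \|\underline{u}-p\|^2 - 2t\langle \underline{u}-p,\,\underline{v}-p\rangle + t^2\|\underline{v}-p\|^2,\]
one obtains, after cancelling the common $\tfrac{1}{2}\|\underline{u}-p\|^2$ and $\lambda g(p)$ contributions,
\[\lambda t\bigl(g(p)-g(\underline{v})\bigr)\leq -t\langle \underline{u}-p,\,\underline{v}-p\rangle + \tfrac{t^2}{2}\|\underline{v}-p\|^2.\]

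Dividing through by $t>0$ and sending $t\to 0^+$ kills the quadratic remainder and produces the stated inequality $\lambda\bigl(g(\underline{v})-g(p)\bigr)\geq\langle \underline{u}-p,\,\underline{v}-p\rangle$. There is essentially no serious obstacle here beyond routine bookkeeping: the only delicate point is to ensure that $g(p)<+\infty$ so the difference $g(p)-g(\underline{v})$ is well-defined, which follows immediately from $F(p)<+\infty$ (guaranteed once $\mathcal{C}\cap dom\,g\neq\emptyset$). As an equivalent and even shorter route, one may invoke Fermat's rule: $p$ minimizes $F$ over $\mathcal{C}$ if and only if $0\in\lambda\partial g(p) + (p-\underline{u})$, i.e., $\underline{u}-p\in\lambda\partial g(p)$, and feeding this into the defining subdifferential inequality $g(\underline{v})-g(p)\geq\tfrac{1}{\lambda}\langle \underline{u}-p,\,\underline{v}-p\rangle$ yields the conclusion at once.
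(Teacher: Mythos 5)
Your proof is correct; the paper itself states this lemma with a citation to Bauschke--Combettes and gives no proof, and your perturbation argument along $p_t=(1-t)p+t\underline{v}$ (equivalently, the Fermat-rule/subdifferential route you sketch) is the standard derivation, relying as it must on the convexity of $g$ assumed throughout the paper. The algebra checks out and the limit $t\to0^+$ is handled properly, so there is nothing to fix.
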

\noindent The result stated in the following lemma can be readily derived from Lemma \ref{prox lemma}.
\begin{lemma}\label{prox lemma result}
Let $g$ be a proper, lower semi-continuous, and convex function, and $\lambda$ a positive constant. Then an element $r^*\in\mathcal{H}$ solves MVI\eqref{MVI non} if and only if 
$$r^* = prox_{\lambda g}(r^*-\lambda\mathcal{T}r^*).$$	
\end{lemma}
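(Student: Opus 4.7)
The plan is to prove both directions directly using Lemma \ref{prox lemma} together with the defining minimization characterization of the proximal operator.

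For the easy direction ($\Leftarrow$), I would assume $r^* = \text{prox}_{\lambda g}(r^* - \lambda \mathcal{T} r^*)$ and apply Lemma \ref{prox lemma} with $\underline{u} := r^* - \lambda \mathcal{T} r^*$. This gives, for every $\underline{v} \in \mathcal{C}$,
\[
\lambda(g(\underline{v}) - g(r^*)) \;\geq\; \langle (r^* - \lambda \mathcal{T} r^*) - r^*,\; \underline{v} - r^*\rangle \;=\; -\lambda \langle \mathcal{T} r^*, \underline{v} - r^*\rangle.
\]
Dividing by $\lambda > 0$ and rearranging yields the MVI defining relation $\langle \mathcal{T} r^*, \underline{v} - r^*\rangle + g(\underline{v}) - g(r^*) \geq 0$ for all $\underline{v} \in \mathcal{C}$, so $r^* \in \Delta(\mathcal{T}; g)$.

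For the converse ($\Rightarrow$), I would start from the MVI inequality $\langle \mathcal{T} r^*, \underline{v} - r^*\rangle + g(\underline{v}) - g(r^*) \geq 0$, multiply through by $\lambda > 0$, and rewrite the linear term so the inequality becomes
\[
\lambda(g(\underline{v}) - g(r^*)) \;\geq\; \langle (r^* - \lambda \mathcal{T} r^*) - r^*,\; \underline{v} - r^*\rangle \qquad \forall\, \underline{v}\in\mathcal{C}.
\]
The remaining step is to recognize that this inequality is precisely the first-order optimality condition for the strongly convex minimization problem $\min_{\underline{v}\in\mathcal{C}} \{\lambda g(\underline{v}) + \tfrac{1}{2}\|(r^* - \lambda \mathcal{T} r^*) - \underline{v}\|^2\}$, whose unique minimizer is $\text{prox}_{\lambda g}(r^* - \lambda \mathcal{T} r^*)$. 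Hence $r^* = \text{prox}_{\lambda g}(r^* - \lambda \mathcal{T} r^*)$.

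The main subtlety, and the only place where care is needed, is the last step of the $(\Rightarrow)$ direction: Lemma \ref{prox lemma} as stated only provides a one-sided implication (from prox to variational inequality), so I must invoke the uniqueness of the minimizer of the strongly convex objective to close the loop. Concretely, if two points both satisfy the variational inequality, the strong convexity of $\underline{v}\mapsto \lambda g(\underline{v}) + \tfrac{1}{2}\|\underline{u}-\underline{v}\|^2$ (which is $1$-strongly convex on $\mathcal{H}$ since $g$ is convex, proper, and lsc) forces them to coincide, and we already know $\text{prox}_{\lambda g}(r^*-\lambda\mathcal{T} r^*)$ itself satisfies it by Lemma \ref{prox lemma}. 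Once this identification is made the equivalence follows, and the lemma is proved.
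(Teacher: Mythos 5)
Your proof is correct and follows exactly the route the paper intends: the paper in fact gives no proof at all, stating only that the lemma ``can be readily derived from Lemma \ref{prox lemma}'', and your argument is precisely that derivation. You also correctly identify and close the one genuine subtlety --- Lemma \ref{prox lemma} only supplies the implication from the prox identity to the variational inequality, so the forward direction needs the characterization/uniqueness of the minimizer of the $1$-strongly convex proximal objective --- a point the paper's one-line remark silently glosses over.
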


\begin{lemma}\cite{Opial} \label{Opial Lemma}
	Let $\mathcal{C}\subset \mathcal{H}$ be nonempty, and $\{x_n\}$ a sequence in $\mathcal{H}$ such that:
	\begin{itemize}
		\item[(i)]  $\underset{n\to \infty}\lim\|x_n - \bar{r}\|$ exists for each $\bar{r}\in \mathcal{C}$;
		\item[(ii)] every sequentially weak cluster point of $\{x_n\}$ is found in $\mathcal{C}.$\\ \\ Then the sequence $\{x_n\}$ weakly converges to a point in $\mathcal{C}.$
	\end{itemize} 
\end{lemma}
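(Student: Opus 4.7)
The plan is to establish uniqueness of the weak cluster point and then invoke the usual subsequence principle in the reflexive space $\mathcal{H}$. First I would observe that hypothesis (i), applied to any single fixed $\bar{r}\in\mathcal{C}$, already forces $\{x_n\}$ to be bounded (the sequence $\|x_n-\bar{r}\|$ is convergent, hence bounded). Since $\mathcal{H}$ is reflexive, $\{x_n\}$ admits at least one weakly convergent subsequence, and by hypothesis (ii) every such weak subsequential limit lies in $\mathcal{C}$. Thus the set of weak cluster points of $\{x_n\}$ is a nonempty subset of $\mathcal{C}$.

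The heart of the argument is to show this set is a singleton. Suppose for contradiction that $x^\ast, y^\ast\in\mathcal{C}$ are two distinct weak cluster points, realized along subsequences $x_{n_k}\rightharpoonup x^\ast$ and $x_{m_j}\rightharpoonup y^\ast$. By (i), both real sequences $\|x_n-x^\ast\|$ and $\|x_n-y^\ast\|$ converge; denote their limits by $a$ and $b$. I would then expand the identity
\begin{equation*}
\|x_n-x^\ast\|^2 = \|x_n-y^\ast\|^2 - 2\langle x_n-y^\ast,\; x^\ast-y^\ast\rangle + \|x^\ast-y^\ast\|^2,
\end{equation*}
and pass to the limit along each of the two subsequences. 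Along $\{x_{n_k}\}$, weak convergence yields $\langle x_{n_k}-y^\ast, x^\ast-y^\ast\rangle \to \|x^\ast-y^\ast\|^2$, giving $a^2 = b^2 - \|x^\ast-y^\ast\|^2$. Along $\{x_{m_j}\}$, the same inner product tends to $0$, giving $a^2 = b^2 + \|x^\ast-y^\ast\|^2$. Subtracting forces $\|x^\ast-y^\ast\|^2 = 0$, a contradiction; hence the weak cluster point is unique.

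To conclude, I would use a standard subsequence argument: the bounded sequence $\{x_n\}$ in the reflexive space $\mathcal{H}$ has every weakly convergent subsequence converging to the same limit $\bar{x}\in\mathcal{C}$, which is equivalent to $x_n\rightharpoonup \bar{x}$. The only nontrivial step is the uniqueness argument via the parallelogram-style expansion above; the rest is direct bookkeeping from reflexivity and the two hypotheses, so no serious obstacle is anticipated.
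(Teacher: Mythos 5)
Your proof is correct, and it is the standard argument for Opial's lemma (the paper itself only cites this result from the literature without proof): boundedness from (i), uniqueness of the weak sequential cluster point via the expansion of $\|x_n-x^\ast\|^2$ in terms of $\|x_n-y^\ast\|^2$ evaluated along the two subsequences, and the subsequence principle to upgrade uniqueness of the cluster point to weak convergence. No gaps.
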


\hfill

\hfill

	\section{Main Results}\label{MAIN RESULT}
\noindent Before we present our method, let us first consider some assumptions which the convergence of the generated sequences is based on.

\begin{assup}\label{ASSUMproxcon}
	\
	\noindent\\ Suppose the following hold:
	\begin{enumerate}
		\item [(B$_1$)] The solutions set, $\Delta(\mathcal{T};g)\neq\emptyset;$
		\item [(B$_2$)] The Lipschitz continuous operator $\mathcal{T}$ with Lipschitz constant $\beta>0$ is sequentially weakly continuous;
		\item [(B$_3$)] $g$ is a convex lower semicontinuous real-valued function on $\mathcal{C};$
		\item [(B$_4$)] The operator $\mathcal{T}$ is monotone. In particular, $\mathcal{T}$ and $g$ satisfy the following generalized monotonicity condition on $\mathcal{C}$
		\begin{eqnarray*}
			\langle \mathcal{T}(\underline{v}), \underline{v}-\underline{u}\rangle +g(\underline{v})-g(\underline{u})\geq0, \ \ \forall \underline{v}\in\mathcal{C}, \ \ \forall \underline{u}\in \Delta(\mathcal{T};g).
		\end{eqnarray*}
	\end{enumerate}
\end{assup}

\begin{assup}\label{Assumconditions}
	\
	\noindent\\ Let $\theta\in(0,1), \ \gamma\in(0,2), \ \sigma>0, \xi>0,$ where $\xi :=\dfrac{1}{\theta}\left(\dfrac{2-\gamma}{\gamma}+1-\theta\right) $. Let $\alpha\in[0,1)$ and $\delta\in(0,1)$ satisfy the following conditions:
	\begin{enumerate}
		\item [(a)] $$0\leq\alpha<\dfrac{\sigma}{1+\sigma};$$
		\item [(b)] 	$$\max\left\{\dfrac{\alpha(1+\sigma)}{1+\alpha\sigma}, \ \ \dfrac{\alpha(1+\alpha)+2\alpha\xi+1-\sqrt{\alpha^4+2\alpha^3+3\alpha^2+4\alpha\xi+2\alpha+1}}{2\alpha\xi}\right\}<\delta.$$
 	\end{enumerate} 
\end{assup}
Now, we present our method as follows:

\hrule
\begin{algo}\label{ALG proxcon}
Proximal and Contraction Method with Relaxed Inertial and Correction Terms
\hrule
\begin{itemize}
	\item [\textbf{STEP 1:}] Select $\alpha$ and $\delta$ such that Assumption \ref{Assumconditions} is satisfied. Pick $w_{-1} = w_{-2}, \ x_0, \ x_{-1}\in\mathcal{H}$ arbitrarily. Let $\lambda_0>0, \ \mu\in (0,1), \ \theta\in(0,1)$ and $\gamma\in(0,2),$ and set $n=0.$
	\item [\textbf{STEP 2:}] Given the current iterates $w_{n-1},$ $w_{n-2},$ $x_{n-1}$ and $x_n,$ evaluate
\begin{eqnarray*}
	\begin{cases}
	w_n = x_n+\alpha(x_n-x_{n-1}) +\delta(1+\alpha)(w_{n-1}-x_n)-\alpha\delta(w_{n-2}-x_{n-1}),\\ \\
	y_n=\mbox{prox}_{\lambda_n g}(w_n-\lambda_n\mathcal{T}w_n),\\ \\ 
	d(w_n,y_n) = (w_n-y_n)-\lambda_n(\mathcal{T}w_n-\mathcal{T}y_n),\\ \\
	z_n = w_n - \gamma\tau_n d(w_n, y_n),
	\end{cases}
\end{eqnarray*}
where 
\begin{eqnarray*}
	\tau_n = 
	\begin{cases}
		\dfrac{\Pi(w_n, y_n)}{\|d(w_n,y_n)\|^2}, \ \ \mbox{if} \ d(w_n,y_n) \neq 0,\\ \\
		0, \ \ \ \ \ \mbox{otherwise,}
	\end{cases}
\end{eqnarray*}
where $\ \ \ \Pi(w_n, y_n) = \langle w_n-y_n, d(w_n,y_n)\rangle.$
\item [\textbf{STEP 3:}] Update the stepsize as:
\begin{eqnarray*}
	\lambda_{n+1} = 
	\begin{cases}
		\min\left\{\dfrac{\mu\|w_n-y_n\|}{\|\mathcal{T}w_n-\mathcal{T}y_n\|}, \lambda_n\right\}, \ \ \mbox{if} \ \mathcal{T}w_n\neq\mathcal{T}y_n, \\ \\
		\lambda_n, \ \ \ \ \ \ \ \ \ \ \ \mbox{otherwise.}
	\end{cases}
\end{eqnarray*} 
\item [\textbf{STEP 4:}] Evaluate
\begin{eqnarray*}
	x_{n+1}= (1-\theta)w_n+\theta z_n.
\end{eqnarray*}

\noindent Set $n:=n+1,$ and go back to \textbf{ STEP 2}.
\end{itemize}
\end{algo}

\begin{remark}\noindent
		\begin{itemize}
		\item 	Notice that, for each $n\geq0,$ the stepsize $\lambda_n$ defined in \textbf{step 3} is a non-increasing monotone sequence that is bounded below. Thus, $\underset{n\to\infty}{\lim}\lambda_n = \lambda^*>0$ exists.
		\item A notable advantage of our proposed Algorithm \ref{ALG proxcon} is that our method incorporates inertial, a two-term correction and relaxation techniques. In fact, methods like the ones studied in \cite{Dong Cho,Wang Chen} can be recovered from our method if $\delta$
        and $\theta$ are set to be zero.
	\end{itemize}

\end{remark}
\begin{lemma}
	Given that $y_n, w_n$ and $d(w_n, y_n)$ are as defined in Algorithm \ref{ALG proxcon}. If $y_n = w_n$ or $d(w_n, y_n) = 0 $ for each $n\geq1,$ then $x_{n+1}\in\Delta(\mathcal{T}; g).$
\end{lemma}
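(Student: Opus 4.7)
My approach is a two--case analysis that reduces the conclusion, in each case, to the prox fixed--point characterization of MVI solutions provided by Lemma \ref{prox lemma result}. The first observation is that under either hypothesis one has $d(w_n, y_n) = 0$ (since $y_n = w_n$ trivially forces $d = 0$ via the definition in Step 2). Consequently $\tau_n = 0$ in the piecewise definition, so
$z_n \;=\; w_n - \gamma \tau_n d(w_n, y_n) \;=\; w_n,$
and therefore $x_{n+1} = (1-\theta) w_n + \theta z_n = w_n$. Thus in both cases the task collapses to proving $w_n \in \Delta(\mathcal{T}; g)$.

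In the first case, $y_n = w_n$ turns the prox step into $w_n = \mbox{prox}_{\lambda_n g}(w_n - \lambda_n \mathcal{T} w_n)$, which by Lemma \ref{prox lemma result} is precisely the assertion $w_n \in \Delta(\mathcal{T}; g)$, completing this branch immediately. In the second case, I rewrite the identity $(w_n - y_n) = \lambda_n(\mathcal{T} w_n - \mathcal{T} y_n)$ as $w_n - \lambda_n \mathcal{T} w_n = y_n - \lambda_n \mathcal{T} y_n$ and substitute into the definition of $y_n$, obtaining $y_n = \mbox{prox}_{\lambda_n g}(y_n - \lambda_n \mathcal{T} y_n)$; Lemma \ref{prox lemma result} then yields $y_n \in \Delta(\mathcal{T}; g)$. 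To transfer membership from $y_n$ to $w_n$, I take the inner product of $w_n - y_n = \lambda_n(\mathcal{T} w_n - \mathcal{T} y_n)$ with $w_n - y_n$ to obtain
$\|w_n - y_n\|^2 \;=\; \lambda_n \langle \mathcal{T} w_n - \mathcal{T} y_n, \, w_n - y_n \rangle,$
and then invoke Cauchy--Schwarz together with the self--adaptive stepsize bound extracted from Step 3, $\lambda_{n+1} \|\mathcal{T} w_n - \mathcal{T} y_n\| \leq \mu \|w_n - y_n\|$, to derive a strict contraction on $\|w_n - y_n\|^2$ that forces $w_n = y_n$ and hence $w_n \in \Delta(\mathcal{T}; g)$.

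The main obstacle is precisely this last step: the stepsize bound naturally involves $\lambda_{n+1}$ rather than $\lambda_n$, so turning it into a genuine contraction requires control on the ratio $\lambda_n / \lambda_{n+1}$. This is where I would lean on the remark following Algorithm \ref{ALG proxcon}, which records that $\{\lambda_n\}$ is monotone non--increasing and bounded below by a positive constant, so that the limit $\lambda^* = \lim_n \lambda_n > 0$ exists and the ratio $\lambda_n / \lambda_{n+1}$ stays close enough to one for the factor $\mu < 1$ to win. Everything else in the argument is essentially bookkeeping around the algorithmic definitions, so the delicacy of this stepsize comparison is the only nontrivial ingredient beyond a direct application of Lemma \ref{prox lemma result}.
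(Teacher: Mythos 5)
Your overall reduction is the same as the paper's: in either case $d(w_n,y_n)=0$, hence $z_n=w_n$ and $x_{n+1}=w_n$, and the conclusion follows from the fixed-point characterization in Lemma \ref{prox lemma result} once one knows $w_n=y_n$. The divergence, and the gap, is in how you get from $d(w_n,y_n)=0$ to $w_n=y_n$. Your contraction argument gives
\[
\|w_n-y_n\|^2=\lambda_n\langle \mathcal{T}w_n-\mathcal{T}y_n,\,w_n-y_n\rangle\le \mu\frac{\lambda_n}{\lambda_{n+1}}\|w_n-y_n\|^2,
\]
and this forces $w_n=y_n$ only if $\mu\lambda_n/\lambda_{n+1}<1$. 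That inequality is not available: $\{\lambda_n\}$ is non-increasing, so $\lambda_n/\lambda_{n+1}\ge 1$ and can exceed $1/\mu$ at any particular index; the fact that $\lambda_n/\lambda_{n+1}\to 1$ is an asymptotic statement and cannot serve a lemma asserted for each fixed $n\ge 1$. Worse, in exactly the scenario you are trying to exclude ($d(w_n,y_n)=0$ with $w_n\ne y_n$ and $\mathcal{T}w_n\ne\mathcal{T}y_n$) one has $\|w_n-y_n\|=\lambda_n\|\mathcal{T}w_n-\mathcal{T}y_n\|$, so the update rule in \textbf{STEP 3} returns $\lambda_{n+1}=\min\{\mu\lambda_n,\lambda_n\}=\mu\lambda_n$, the factor $\mu\lambda_n/\lambda_{n+1}$ equals $1$ exactly, and your inequality degenerates to $\|w_n-y_n\|^2\le\|w_n-y_n\|^2$, yielding no contradiction. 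So the stepsize bound is structurally incapable of closing this case.

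The paper instead uses the $\beta$-Lipschitz continuity of $\mathcal{T}$ directly: $\|d(w_n,y_n)\|\ge(1-\lambda_n\beta)\|w_n-y_n\|$, so $d(w_n,y_n)=0$ forces $w_n=y_n$ whenever $\lambda_n\beta<1$. (That positivity of $1-\lambda_n\beta$ is itself left implicit in the paper and rests on the usual standing restriction on $\lambda_0$, so there is a shared looseness here; but the Lipschitz route at least reduces the issue to a standard parameter condition, whereas your route fails even under that condition.) The rest of your argument --- the case $y_n=w_n$, the identity $y_n=\mbox{prox}_{\lambda_n g}(y_n-\lambda_n\mathcal{T}y_n)$ when $d(w_n,y_n)=0$, and the appeal to Lemma \ref{prox lemma result} --- is correct and matches the paper; the detour through ``$y_n\in\Delta(\mathcal{T};g)$ then transfer to $w_n$'' becomes unnecessary once $w_n=y_n$ is in hand.
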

\begin{proof}\noindent\\
	\noindent Using the fact that $\mathcal{T}$ is $\beta-$Lipschitz we have, $\forall n\in\mathbb{N},$ that 
	\begin{eqnarray}\label{s1}
		\|d(w_n, y_n)\| &=& \|w_n-y_n-\lambda_n(\mathcal{T}w_n-\mathcal{T}y_n)\|\nonumber\\
		&\geq&\|w_n-y_n\|-\lambda_n\|\mathcal{T}w_n-\mathcal{T}y_n\|\nonumber\\
		&\geq& \|w_n-y_n\|-\lambda_n\beta\|w_n-y_n\|\nonumber\\
		&=& (1-\lambda_n\beta)\|w_n-y_n\|.
	\end{eqnarray}
	Again, one can quickly see that 
	\begin{eqnarray}\label{s2}
		\|d(w_n,y_n)\|\leq(1+\lambda_n\beta)\|w_n-y_n\|, \ \ \forall n\in\mathbb{N}.
	\end{eqnarray}
	Combining \eqref{s1} and \eqref{s2}, we get
	\begin{eqnarray*}
		(1-\lambda_n\beta)\|w_n-y_n\|\leq 	\|d(w_n,y_n)\|\leq(1+\lambda_n\beta)\|w_n-y_n\|, \ \ \forall n\geq1,
	\end{eqnarray*}
	so that $w_n =y_n$ if and only if $d(w_n, y_n)=0.$ Thus, 
	$$y_n = prox_{\lambda_ng}(y_n-\lambda_n\mathcal{T}y_n+ d(w_n,y_n)) \ \ \forall n\geq1.$$
	When $d(w_n,y_n)=0,$ using the definition of $z_n,$ it follows that $z_n = w_n.$ Also, using the definition of $x_{n+1}$, this further implies that $x_{n+1} = w_n=y_n$ and
	$$y_n = prox_{\lambda_ng}(y_n-\lambda_n\mathcal{T}y_n) \ \ \forall n\geq1.$$
	Therefore, by Lemma \ref{prox lemma result} $x_{n+1}\in\Delta(\mathcal{T};g).$
\end{proof}
\begin{lemma}
	Let $\{x_n\}$ be the sequence generated by Algorithm \ref{ALG proxcon}. If $\bar{r}\in
	\Delta(\mathcal{T};g),$ then, under B$_1$ and B$_4$ of Assumption \ref{ASSUMproxcon}, we have
	\begin{eqnarray}\label{LMPA}
		&\|x_{n+1}-\bar{r}\|^2\leq\|w_n-\bar{r}\|^2-\xi\|x_{n+1}-w_n\|^2,&\\
		&\mbox{where} \ \ \xi:=\dfrac{1}{\theta}\left(\dfrac{2-\gamma}{\gamma}+1-\theta\right)&\nonumber
	\end{eqnarray} 
	and
	
	\begin{eqnarray}\label{LMPB}
		\|w_n-y_n\|\leq\dfrac{1}{\theta\gamma}\left(\dfrac{1+\mu\dfrac{\lambda_n}{\lambda_{n+1}}}{1-\mu\dfrac{\lambda_n}{\lambda_{n+1}}}\right)\|x_{n+1}-w_n\|.
	\end{eqnarray}
\end{lemma}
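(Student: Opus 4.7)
The plan is to prove the two inequalities separately, using the convex-combination structure of $x_{n+1}$ for the first and the adaptive step-size rule for the second.

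For the first inequality, I would start by applying Lemma \ref{LM con1}(a) with $1+\sigma$ replaced by $\theta$ (or directly use the convex-combination identity) to expand
\[
\|x_{n+1}-\bar{r}\|^2 \;=\; (1-\theta)\|w_n-\bar{r}\|^2 + \theta\|z_n-\bar{r}\|^2 - \theta(1-\theta)\|w_n-z_n\|^2.
\]
I then need a bound on $\|z_n-\bar{r}\|^2$. Expanding $z_n = w_n - \gamma\tau_n d(w_n,y_n)$ yields
\[
\|z_n-\bar{r}\|^2 = \|w_n-\bar{r}\|^2 - 2\gamma\tau_n\langle w_n-\bar{r},d(w_n,y_n)\rangle + \gamma^2\tau_n^2\|d(w_n,y_n)\|^2.
\]
Splitting $w_n-\bar{r} = (w_n-y_n) + (y_n-\bar{r})$ and using the definition of $\tau_n$ gives $\langle w_n-y_n, d(w_n,y_n)\rangle = \tau_n\|d(w_n,y_n)\|^2$, so it remains to show $\langle y_n-\bar{r}, d(w_n,y_n)\rangle \ge 0$. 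This is the main obstacle and the place where the hypotheses B$_1$ and B$_4$ are actually used: I would apply Lemma \ref{prox lemma} with $\underline{u}=w_n-\lambda_n\mathcal{T}w_n$ and $\underline{v}=\bar{r}$ to obtain $\langle w_n-y_n-\lambda_n\mathcal{T}w_n,\bar r-y_n\rangle \le \lambda_n(g(\bar r)-g(y_n))$, and then add the generalized-monotonicity inequality $\lambda_n\langle\mathcal{T}y_n,y_n-\bar r\rangle + \lambda_n(g(y_n)-g(\bar r))\ge 0$ coming from B$_4$ evaluated at $\underline{v}=y_n$, $\underline{u}=\bar r$. After cancellations the residual is exactly $\langle d(w_n,y_n), y_n-\bar r\rangle \ge 0$.

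With that inner-product nonnegativity in hand, the cross term is bounded by $-2\gamma\tau_n^2\|d(w_n,y_n)\|^2$, yielding
\[
\|z_n-\bar{r}\|^2 \le \|w_n-\bar{r}\|^2 - \gamma(2-\gamma)\tau_n^2\|d(w_n,y_n)\|^2 = \|w_n-\bar{r}\|^2 - \tfrac{2-\gamma}{\gamma}\|w_n-z_n\|^2,
\]
since $\|w_n-z_n\|^2=\gamma^2\tau_n^2\|d(w_n,y_n)\|^2$. Substituting back and noting that $x_{n+1}-w_n=\theta(z_n-w_n)$, so $\|w_n-z_n\|^2=\theta^{-2}\|x_{n+1}-w_n\|^2$, the coefficient in front collapses to precisely $\xi=\theta^{-1}\bigl(\tfrac{2-\gamma}{\gamma}+1-\theta\bigr)$, giving \eqref{LMPA}.

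For the second inequality I would exploit the adaptive step-size rule directly. From $x_{n+1}-w_n=-\theta\gamma\tau_n d(w_n,y_n)$ we get $\|x_{n+1}-w_n\|=\theta\gamma\,\Pi(w_n,y_n)/\|d(w_n,y_n)\|$ whenever $d(w_n,y_n)\neq 0$. The step-size definition in STEP 3 gives $\lambda_n\|\mathcal{T}w_n-\mathcal{T}y_n\|\le \mu\frac{\lambda_n}{\lambda_{n+1}}\|w_n-y_n\|$, so the triangle inequality yields the two-sided bound
\[
\Bigl(1-\mu\tfrac{\lambda_n}{\lambda_{n+1}}\Bigr)\|w_n-y_n\|^2 \le \Pi(w_n,y_n) \le \|w_n-y_n\|\|d(w_n,y_n)\|, \qquad \|d(w_n,y_n)\|\le\Bigl(1+\mu\tfrac{\lambda_n}{\lambda_{n+1}}\Bigr)\|w_n-y_n\|.
\]
Inserting the lower bound on $\Pi(w_n,y_n)$ and the upper bound on $\|d(w_n,y_n)\|$ into the expression for $\|x_{n+1}-w_n\|$ gives $\|x_{n+1}-w_n\|\ge \theta\gamma\,\frac{1-\mu\lambda_n/\lambda_{n+1}}{1+\mu\lambda_n/\lambda_{n+1}}\|w_n-y_n\|$, and rearranging produces \eqref{LMPB}. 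The case $d(w_n,y_n)=0$ is handled by the preceding lemma, which forces $w_n=y_n$ and makes \eqref{LMPB} trivial.
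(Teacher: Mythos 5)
Your proposal is correct and follows essentially the same route as the paper: the convex-combination expansion of $\|x_{n+1}-\bar r\|^2$, the nonnegativity of $\langle y_n-\bar r, d(w_n,y_n)\rangle$ obtained by adding the proximal inequality to the generalized monotonicity condition (B$_4$), and the step-size bound $\lambda_n\|\mathcal{T}w_n-\mathcal{T}y_n\|\le\mu\tfrac{\lambda_n}{\lambda_{n+1}}\|w_n-y_n\|$ for the second estimate. The only difference is cosmetic: for \eqref{LMPB} you work directly from $\|x_{n+1}-w_n\|=\theta\gamma\,\Pi(w_n,y_n)/\|d(w_n,y_n)\|$, whereas the paper chains the same bounds starting from $\|w_n-y_n\|^2\le(1-\mu\lambda_n/\lambda_{n+1})^{-1}\Pi(w_n,y_n)$; both yield the identical constant.
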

\begin{proof}\noindent\\
	\noindent By the definition of $x_{n+1},$ we obtain
	\begin{eqnarray}\label{1a}
		\|x_{n+1}-\bar{r}\|^2 &=& \|(1-\theta)w_n+\theta z_n-\bar{r}\|^2\nonumber\\
		&=&\|(1-\theta)(w_n-\bar{r})+\theta(z_n-\bar{r})\|^2\nonumber\\
		&=& (1-\theta)\|w_n-\bar{r}\|^2+\theta\|z_n-\bar{r}\|^2-\theta(1-\theta)\|z_n-w_n\|^2.
	\end{eqnarray}
	From the definition of $z_n,$ it follows that
	\begin{eqnarray}\label{5}
		\|z_n-\bar{r}\|^2 &=& \|(w_n-\bar{r})-\gamma\tau_n d(w_n, y_n)\|^2\nonumber\\
		&=& \|w_n-\bar{r}\|^2-2\gamma\tau_n\langle w_n-\bar{r}, d(w_n,y_n)\rangle + \gamma^2\tau_n^2\|d(w_n,y_n)\|^2.
	\end{eqnarray}
	By combining the definition $y_n$ and Lemma \ref{prox lemma}, then
	\begin{eqnarray}\label{5prox}
		\lambda_n(g(y_n)-g(z))\leq\langle y_n-z, w_n-y_n-\lambda_n\mathcal{T}w_n\rangle \ \ \ \forall z\in\mathcal{C}.
	\end{eqnarray}
	Putting $z=\bar{r}\in\Delta(\mathcal{T}; g)$ into \eqref{5prox}, we have
	\begin{eqnarray}\label{6}
		\lambda_n(g(y_n)-g(\bar{r}))\leq\langle y_n-\bar{r}, w_n-y_n-\lambda_n\mathcal{T}w_n\rangle.
	\end{eqnarray}
	Since B$_4$ of Assumption \ref{ASSUMproxcon} is fulfilled, we have
	\begin{eqnarray*}
		g(\bar{r})-g(y_n)\leq\langle y_n-\bar{r}, \mathcal{T}y_n\rangle.
	\end{eqnarray*}
	That is,
	\begin{eqnarray}\label{7}
		\lambda_n(g(\bar{r})-g(y_n))\leq\langle y_n-\bar{r}, \lambda_n\mathcal{T}y_n\rangle.
	\end{eqnarray}
	Adding \eqref{6} and \eqref{7} yields
	\begin{eqnarray*}
		\langle y_n-\bar{r}, d(w_n,y_n)\rangle = \langle y_n-\bar{r}, (w_n-y_n)-\lambda_n(\mathcal{T}w_n-\mathcal{T}y_n)\rangle\geq0.
	\end{eqnarray*}
	On the other hand,
	\begin{eqnarray}\label{8}
		\langle w_n-\bar{r}, d(w_n,y_n)\rangle &=& \langle w_n-y_n, d(w_n,y_n)\rangle + \langle y_n-\bar{r}, d(w_n,y_n)\rangle\nonumber\\
		&\geq& \langle w_n-y_n, d(w_n,y_n)\rangle\nonumber\\
		&=& \Pi(w_n,y_n).
	\end{eqnarray}
	Using \eqref{8} in \eqref{5} and noting that $\tau_n = \dfrac{\Pi(w_n,y_n)}{\|d(w_n,y_n)\|^2}$ then, we get
	\begin{eqnarray}\label{9}
		\|z_n-\bar{r}\|^2&\leq&\|w_n-\bar{r}\|^2-2\gamma\tau_n\Pi(w_n,y_n)+\gamma^2\tau_n^2\|d(w_n,y_n)\|^2\nonumber\\
		&=&\|w_n-\bar{r}\|^2-\gamma(2-\gamma)\tau_n\Pi(w_n, y_n).
	\end{eqnarray}
	Again, from the definition of $z_n,$ it follows that
	\begin{eqnarray}\label{10}
		\tau_n\Pi(w_n,y_n) = \|\tau_n d(w_n,y_n)\|^2=\dfrac{1}{\gamma^2}\|z_n-w_n\|^2.
	\end{eqnarray}
	Using \eqref{10} in \eqref{9}, we have
	\begin{eqnarray}\label{11}
		\|z_n-\bar{r}\|^2\leq\|w_n-\bar{r}\|^2-\dfrac{2-\gamma}{\gamma}\|z_n-w_n\|^2.
	\end{eqnarray}
	Applying \eqref{11} in \eqref{1a}, we obtain
	\begin{eqnarray*}
		\|x_{n+1}-\bar{r}\|^2&\leq&(1-\theta)\|w_n-\bar{r}\|^2+\theta\left[\|w_n-\bar{r}\|^2-\dfrac{2-\gamma}{\gamma}\|z_n-w_n\|^2\right]-\theta(1-\theta)\|z_n-w_n\|^2\\
		&=& \|w_n-\bar{r}\|^2-\theta\left(\dfrac{2-\gamma}{\gamma}+1-\theta\right)\|z_n-w_n\|^2.
	\end{eqnarray*}
	But $x_{n+1} = (1-\theta)w_n+\theta z_n$ implies that $z_n-w_n = \dfrac{1}{\theta}(x_{n+1}-w_n).$\\
	
	 \noindent Therefore
	 \begin{eqnarray*}
	 	\|x_{n+1}-\bar{r}\|^2&\leq&\|w_n-\bar{r}\|^2-\dfrac{1}{\theta}\left(\dfrac{2-\gamma}{\gamma}+1-\theta\right)\|x_{n+1}-w_n\|^2\\
	 	&=& \|w_n-\bar{r}\|^2-\xi\|x_{n+1}-w_n\|^2,
	 \end{eqnarray*}
	where $\xi = \dfrac{1}{\theta}\left(\dfrac{2-\gamma}{\gamma}+1-\theta\right).$ Hence, establishing \eqref{LMPA}.
	
	\noindent Furthermore,
	\begin{eqnarray}\label{13}
		\Pi(w_n,y_n) &=& \langle w_n-y_n, d(w_n,y_n)\rangle\nonumber\\
		&=& \langle w_n-y_n, (w_n-y_n)-\lambda_n(\mathcal{T}w_n-\mathcal{T}y_n)\rangle\nonumber\\
		&=&\|w_n-y_n\|^2-\lambda_n\langle w_n-y_n, \mathcal{T}w_n-\mathcal{T}y_n\rangle\nonumber\\
		&\geq& \|w_n-y_n\|^2-\lambda_n\|w_n-y_n\|\|\mathcal{T}w_n-\mathcal{T}y_n\|\nonumber\\
		&\geq& \|w_n-y_n\|^2-\mu\dfrac{\lambda_n}{\lambda_{n+1}}\|w_n-y_n\|^2\nonumber\\
		&=&\left(1-\mu\dfrac{\lambda_n}{\lambda_{n+1}}\right)\|w_n-y_n\|^2.
	\end{eqnarray}
	It follows from \eqref{13} and the definitions of $z_n$ and $x_{n+1}$ that
	\begin{eqnarray*}
		\|w_n-y_n\|^2&\leq&\left(\dfrac{1}{1-\mu\dfrac{\lambda_n}{\lambda_{n+1}}}\right)\Big\langle w_n-y_n, d(w_n, y_n)\Big\rangle\\
		&=&\left(\dfrac{1}{1-\mu\dfrac{\lambda_n}{\lambda_{n+1}}}\right)\Pi(w_n,y_n)\\
		&=& \left(\dfrac{1}{1-\mu\dfrac{\lambda_n}{\lambda_{n+1}}}\right)\tau_n\|d(w_n,y_n)\|^2\\
		&\leq& \left(\dfrac{1}{1-\mu\dfrac{\lambda_n}{\lambda_{n+1}}}\right)\tau_n\|d(w_n,y_n)\|\Big(\|w_n-y_n\|+\lambda_n\|\mathcal{T}w_n-\mathcal{T}y_n\|\Big)\\
		&\leq& \left(\dfrac{1}{1-\mu\dfrac{\lambda_n}{\lambda_{n+1}}}\right)\tau_n\|d(w_n,y_n)\|\Big(\|w_n-y_n\|+\mu\dfrac{\lambda_n}{\lambda_{n+1}}\|w_n-y_n\|\Big)\\
		&=&\left(\dfrac{1+\mu\dfrac{\lambda_n}{\lambda_{n+1}}}{1-\mu\dfrac{\lambda_n}{\lambda_{n+1}}}\right)\tau_n\|d(w_n,y_n)\|\|w_n-y_n\|\\
		&\leq&\dfrac{1}{\gamma}\left(\dfrac{1+\mu\dfrac{\lambda_n}{\lambda_{n+1}}}{1-\mu\dfrac{\lambda_n}{\lambda_{n+1}}}\right)\|w_n-z_n\|\|w_n-y_n\|\\
		&=&\dfrac{1}{\theta\gamma}\left(\dfrac{1+\mu\dfrac{\lambda_n}{\lambda_{n+1}}}{1-\mu\dfrac{\lambda_n}{\lambda_{n+1}}}\right)\|x_{n+1}-w_n\|\|w_n-y_n\|,
	\end{eqnarray*}
	and \eqref{LMPB} follows immediately.
\end{proof}
\begin{lemma}
	Suppose Assumptions \ref{ASSUMproxcon} and \ref{Assumconditions} are satisfied. Then the sequences generated by Algorithm \ref{ALG proxcon} are bounded.
\end{lemma}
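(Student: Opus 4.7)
The plan is to couple the contraction-type inequality $\|x_{n+1}-\bar{r}\|^2 \le \|w_n-\bar{r}\|^2 - \xi\|x_{n+1}-w_n\|^2$ already established in the preceding lemma with a careful expansion of $\|w_n-\bar{r}\|^2$. The crucial observation is that the inertial-plus-two-correction update can be rearranged as
\[
w_n \;=\; (1-\delta)\bigl[(1+\alpha)x_n-\alpha x_{n-1}\bigr] \;+\; \delta\bigl[(1+\alpha)w_{n-1}-\alpha w_{n-2}\bigr],
\]
which, since $\delta\in(0,1)$, is a genuine convex combination of the two Alvarez-type inertial vectors $u_n := (1+\alpha)x_n - \alpha x_{n-1}$ and $v_n := (1+\alpha)w_{n-1}-\alpha w_{n-2}$. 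Applying the standard convexity identity for $\|\cdot-\bar{r}\|^2$ and then invoking Lemma \ref{LM con1}(a) (with $\sigma$ taken equal to $\alpha$) on each of $\|u_n-\bar{r}\|^2$ and $\|v_n-\bar{r}\|^2$ produces an inequality of the schematic form
\begin{align*}
\|x_{n+1}-\bar{r}\|^2 &\le (1-\delta)\bigl[(1+\alpha)\|x_n-\bar{r}\|^2 - \alpha\|x_{n-1}-\bar{r}\|^2\bigr] \\
&\quad + \delta\bigl[(1+\alpha)\|w_{n-1}-\bar{r}\|^2 - \alpha\|w_{n-2}-\bar{r}\|^2\bigr] + R_n,
\end{align*}
where $R_n$ gathers the $\alpha(1+\alpha)$-scaled squared differences $\|x_n-x_{n-1}\|^2$ and $\|w_{n-1}-w_{n-2}\|^2$, the convex-combination residual $-\delta(1-\delta)\|u_n-v_n\|^2$, and the descent contribution $-\xi\|x_{n+1}-w_n\|^2$.

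Second, I would introduce a Lyapunov-type functional of the form
\[
\Phi_n = \|x_n-\bar{r}\|^2 + A\|w_{n-1}-\bar{r}\|^2 + B\sigma\|x_n-x_{n-1}\|^2 + C\sigma\|w_{n-1}-w_{n-2}\|^2,
\]
with constants $A,B,C$ chosen so that substituting the recursion above yields cancellation of all cross terms at consecutive indices. The role of the auxiliary parameter $\sigma>0$ appearing in Assumption \ref{Assumconditions} is precisely to parametrize this family of potentials: the inequality $\alpha<\sigma/(1+\sigma)$ in condition~(a) is what forces the coefficients in $\Phi_n$ to remain non-negative (so that $\Phi_n\ge 0$), while the lower bound on $\delta$ involving $\alpha(1+\sigma)/(1+\alpha\sigma)$ in condition~(b) is exactly the threshold at which the leading coefficient of $\|x_n-\bar{r}\|^2$ in $\Phi_{n+1}-\Phi_n$ becomes non-positive.

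The main obstacle is the bookkeeping in the second half of condition~(b), which is where the parameter $\xi$ enters. A Young-inequality split is needed to absorb the awkward cross term $\|u_n-v_n\|^2$ into the descent term $-\xi\|x_{n+1}-w_n\|^2$; the second fraction bounding $\delta$ from below is precisely the algebraic condition ensuring that the combined residual quadratic form in $\|x_n-x_{n-1}\|^2$, $\|w_{n-1}-w_{n-2}\|^2$, and $\|x_{n+1}-w_n\|^2$ is negative semi-definite. Once $\Phi_{n+1}\le\Phi_n$ is verified, the non-negativity of $\Phi_n$ gives $\Phi_n\le\Phi_0$ for all $n$, and unwinding via the Alvarez--Attouch style recursion $\|x_n-\bar{r}\|^2 \le \alpha\|x_{n-1}-\bar{r}\|^2 + \text{const}$ (permissible since $\alpha\in[0,1)$) yields boundedness of $\{x_n\}$. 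Boundedness of $\{w_n\}$ then follows from the explicit formula for $w_n$, and boundedness of $\{y_n\}$ and $\{z_n\}$ is an immediate consequence of the non-expansiveness of $\mathrm{prox}_{\lambda_n g}$, the Lipschitz continuity of $\mathcal{T}$, and the uniform bound $\tau_n\|d(w_n,y_n)\|\le\|w_n-y_n\|(1+\mu\lambda_n/\lambda_{n+1})/(1-\mu\lambda_n/\lambda_{n+1})$ already exploited in \eqref{LMPB}.
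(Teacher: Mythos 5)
Your rearrangement of the update is correct and is in fact the same identity the paper exploits: writing $s_n := x_n+\delta(w_{n-1}-x_n)=(1-\delta)x_n+\delta w_{n-1}$, one has $w_n=(1+\alpha)s_n-\alpha s_{n-1}=(1-\delta)u_n+\delta v_n$ with your $u_n,v_n$. The difference is the order in which the identity is used. The paper forms the convex combination \emph{first} and applies Lemma \ref{LM con1}(a) \emph{second}, so the whole analysis collapses onto the single auxiliary sequence $\{s_n\}$ and the three-term potential $\Psi_n=\frac{1}{1-\delta}\|s_n-\bar r\|^2-\frac{\alpha}{1-\delta}\|s_{n-1}-\bar r\|^2+\frac{\delta(1-\alpha)}{(1-\delta)^2}\|s_n-s_{n-1}\|^2$. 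You do it the other way round and are therefore forced to track $\{x_n\}$ and $\{w_n\}$ separately in a four-term functional $\Phi_n$; to close the telescoping you then also need a recursion bounding $\|w_n-\bar r\|^2$ by the components of $\Phi_n$, which your sketch never writes down.

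The genuine gap is that the decisive algebra is announced rather than carried out. The entire content of the lemma is that the two explicit thresholds in Assumption \ref{Assumconditions}(b) are exactly what make the potential non-negative and non-increasing: non-negativity follows from a Peter--Paul split with parameter $\sigma$ together with $\alpha<\sigma/(1+\sigma)$ and $\delta>\alpha(1+\sigma)/(1+\alpha\sigma)$, while monotonicity reduces to the sign of the quadratic $\alpha\xi\delta^2-\bigl(\alpha(1+\alpha)+2\alpha\xi+1\bigr)\delta+\alpha(1+\alpha+\xi)$, whose smaller root is precisely the square-root expression in Assumption \ref{Assumconditions}(b). Your proposal says the constants $A,B,C$ are ``chosen so that \ldots{} cancellation'' occurs and that the second fraction ``is precisely the algebraic condition ensuring \ldots{} negative semi-definite[ness]'' without exhibiting the constants or verifying either claim; since it is not a priori clear that a four-term $\Phi_n$ of the stated form admits such constants under exactly these hypotheses, this is the step that must be done, not asserted. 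A secondary inaccuracy: $\sigma$ does not scale the coefficients of the difference terms in the potential; it is the Peter--Paul parameter used to absorb the negative term $-\frac{\alpha}{1-\delta}\|s_{n-1}-\bar r\|^2$ and obtain $\Psi_n\ge \frac{1}{1-\delta}\bigl(1-\alpha(1+\tfrac{1}{\sigma})\bigr)\|s_n-\bar r\|^2\ge 0$, which is also how boundedness of $\{s_n\}$ (hence of $\{w_n\}$ and $\{x_n\}$) is read off directly, with no Alvarez--Attouch unwinding required.
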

\begin{proof}\noindent\\
	\noindent If we let $s_n=x_n+\delta(w_{n-1}-x_n)$ then, it ensues from the definition of $w_n$ in Algorithm \ref{ALG proxcon} that
	$$w_n=s_n+\alpha(s_n-s_{n-1}).$$
	Furthermore, we obtain from the definition of $s_n$ that
	$$x_n = \dfrac{1}{1-\delta}s_n-\dfrac{\delta}{1-\delta}w_{n-1},$$
	and
	\begin{eqnarray}\label{14}
		x_{n+1} = \dfrac{1}{1-\delta}s_{n+1}-\dfrac{\delta}{1-\delta}w_n.
	\end{eqnarray}
	By Lemma \ref{LM con1} (a) and \eqref{14}, we obtain
	\begin{eqnarray}\label{15}
		\|x_{n+1}-\bar{r}\|^2 &=& \|\dfrac{1}{1-\delta}(s_{n+1}-\bar{r})-\dfrac{\delta}{1-\delta}(w_n-\bar{r})\|^2\nonumber\\
		&=&\dfrac{1}{1-\delta}\|s_{n+1}-\bar{r}\|^2-\dfrac{\delta}{1-\delta}\|w_n-\bar{r}\|^2+\dfrac{\delta}{(1-\delta)^2}\|s_{n+1}-w_n\|^2.	\end{eqnarray}
		Substituting \eqref{15} in \eqref{LMPA}, we have
	\begin{eqnarray*}
		\dfrac{1}{1-\delta}\|s_{n+1}-\bar{r}\|^2-\dfrac{\delta}{1-\delta}\|w_n-\bar{r}\|^2+\dfrac{\delta}{(1-\delta)^2}\|s_{n+1}-w_n\|^2\leq\|w_n-\bar{r}\|^2-\xi\|x_{n+1}-w_n\|^2.
	\end{eqnarray*}
	Or equivalently,
	\begin{eqnarray}\label{16}
		\dfrac{1}{1-\delta}\|s_{n+1}-\bar{r}\|^2\leq	\dfrac{1}{1-\delta}\|w_n-\bar{r}\|^2 -\xi\|x_{n+1}-w_n\|^2 - \dfrac{\delta}{(1-\delta)^2}\|s_{n+1}-w_n\|^2.
	\end{eqnarray}
	On the other hand,
	\begin{eqnarray}\label{17}
		\|x_{n+1}-w_n\|^2 &=& \|x_{n+1}-s_n-\alpha(s_n-s_{n-1})\|^2\nonumber\\
		&=&\|x_{n+1}-s_n\|^2-2\alpha\langle x_{n+1}-s_n, s_n-s_{n-1}\rangle+\alpha^2\|s_n-s_{n-1}\|^2\nonumber\\
		&\geq&\|x_{n+1}-s_n\|^2-2\alpha\|x_{n+1}-s_n\|\|s_n-s_{n-1}\|+\alpha^2\|s_n-s_{n-1}\|^2\nonumber\\
		&\geq& \|x_{n+1}-s_n\|^2-\alpha\Big[\|x_{n+1}-s_n\|^2+\|s_n-s_{n-1}\|^2\Big]+\alpha^2\|s_n-s_{n-1}\|^2\nonumber\\
		&=&(1-\alpha)\|x_{n+1}-s_n\|^2-\alpha(1-\alpha)\|s_n-s_{n-1}\|^2.
	\end{eqnarray}
	Replacing $x_{n+1}$ in \eqref{17} with $s_{n+1},$ we obtain
	\begin{eqnarray}\label{18}
		\|s_{n+1}-w_n\|\geq (1-\alpha)\|s_{n+1}-s_n\|^2-\alpha(1-\alpha)\|s_n-s_{n-1}\|^2.
	\end{eqnarray}
	Putting \eqref{17} and \eqref{18} in \eqref{16} yields
	\begin{eqnarray}\label{20}
		\dfrac{1}{1-\delta}\|s_{n+1}-\bar{r}\|^2&\leq&	\dfrac{1}{1-\delta}\|w_n-\bar{r}\|^2 -\xi\Big[(1-\alpha)\|x_{n+1}-s_n\|^2-\alpha(1-\alpha)\|s_n-s_{n-1}\|^2\Big] \nonumber\\
		&&\;-\; \dfrac{\delta}{(1-\delta)^2}\Big[(1-\alpha)\|s_{n+1}-s_n\|^2-\alpha(1-\alpha)\|s_n-s_{n-1}\|^2\Big].
	\end{eqnarray}
	Moreover,
	\begin{eqnarray}\label{21}
		\|w_n-\bar{r}\|^2 &=& \|(1+\alpha)(s_n-\bar{r})-\alpha(s_{n-1}-\bar{r})\|^2\nonumber\\
		&=&(1+\alpha)\|s_n-\bar{r}\|^2-\alpha\|s_{n-1}-\bar{r}\|^2+\alpha(1+\alpha)\|s_n-s_{n-1}\|^2.
	\end{eqnarray}
	Using \eqref{21} in \eqref{20}, we have
	\begin{eqnarray*}
		\dfrac{1}{1-\delta}\|s_{n+1}-\bar{r}\|^2&\leq&\dfrac{1}{1-\delta}(1+\alpha)\|s_n-\bar{r}\|^2-\dfrac{\alpha}{1-\delta}\|s_{n-1}-\bar{r}\|^2+\dfrac{\alpha(1+\alpha)}{1-\delta}\|s_n-s_{n-1}\|^2\\
		&&\;-\;\xi(1-\alpha)\|x_{n+1}-s_n\|^2+\alpha\xi(1-\alpha)\|s_n-s_{n-1}\|^2\\
		&&\;-\; \dfrac{\delta(1-\alpha)}{(1-\delta)^2}\|s_{n+1}-s_n\|^2+ \dfrac{\alpha\delta(1-\alpha)}{(1-\delta)^2}\|s_n-s_{n-1}\|^2
	\end{eqnarray*}
	Or equivalently,
	\begin{eqnarray}\label{22}
		&&\dfrac{1}{1-\delta}\|s_{n+1}-\bar{r}\|^2-\dfrac{\alpha}{1-\delta}\|s_n-\bar{r}\|^2+\dfrac{\delta(1-\alpha)}{(1-\alpha)^2}\|s_{n+1}-s_n\|^2\nonumber\\
		&\leq&\dfrac{1}{1-\delta}\|s_n-\bar{r}\|^2-\dfrac{\alpha}{1-\delta}\|s_{n-1}-\bar{r}\|^2+\dfrac{\delta(1-\alpha)}{(1-\delta)^2}\|s_n-s_{n-1}\|^2\nonumber\\
		&&\;+\; \left[\dfrac{\alpha(1+\alpha)}{1-\delta}+\alpha\xi(1-\alpha)+\dfrac{\alpha\delta(1-\alpha)}{(1-\delta)^2}-\dfrac{\delta(1-\alpha)}{(1-\delta)^2}\right]\|s_n-s_{n-1}\|^2-\xi(1-\alpha)\|x_{n+1}-s_n\|^2.
	\end{eqnarray}
	Define $\Psi_n := \dfrac{1}{1-\delta}\|s_n-\bar{r}\|^2-\dfrac{\alpha}{1-\delta}\|s_{n-1}-\bar{r}\|^2+\dfrac{\delta(1-\alpha)}{(1-\delta)^2}\|s_n-s_{n-1}\|^2.$\\
	Then, we can write \eqref{22} as
	\begin{eqnarray}\label{23}
		\Psi_{n+1}&\leq&\Psi_n+ \left[\dfrac{\alpha(1+\alpha)}{1-\delta}+\alpha\xi(1-\alpha)+\dfrac{\alpha\delta(1-\alpha)}{(1-\delta)^2}-\dfrac{\delta(1-\alpha)}{(1-\delta)^2}\right]\|s_n-s_{n-1}\|^2\nonumber\\
		&&\;-\;\xi(1-\alpha)\|x_{n+1}-s_n\|^2.
	\end{eqnarray}
	Let us show that $\Psi_n\geq0.$ Using the Peter-Paul inequality, we have, for $\sigma>0,$ that
	\begin{eqnarray}\label{23a}
		\Psi_n &=& \dfrac{1}{1-\delta}\|s_n-\bar{r}\|^2-\dfrac{\alpha}{1-\delta}\|s_{n-1}-\bar{r}\|^2+\dfrac{\delta(1-\alpha)}{(1-\delta)^2}\|s_n-s_{n-1}\|^2\nonumber\\
		&\geq&\dfrac{1}{1-\delta}\|s_n-\bar{r}\|^2-\dfrac{\alpha}{1-\delta}\left(1+\dfrac{1}{\sigma}\right)\|s_n-\bar{r}\|^2-\dfrac{\alpha(1+\sigma)}{1-\delta}\|s_n-s_{n-1}\|^2+\dfrac{\delta(1-\alpha)}{(1-\delta)^2}\|s_n-s_{n-1}\|^2\nonumber\\
		&=&\dfrac{1}{1-\delta}\left(1-\alpha\Big(1+\dfrac{1}{\sigma}\Big)\right)\|s_n-\bar{r}\|^2+\dfrac{1}{1-\delta}\left(\dfrac{\delta(1-\alpha)}{1-\delta}-\alpha(1+\sigma)\right)\|s_n-s_{n-1}\|^2\\
		&\geq& 0, \nonumber
	\end{eqnarray}
	since $0\leq\alpha<\dfrac{\sigma}{1+\sigma}$ and $\dfrac{\alpha(1+\sigma)}{1+\alpha\sigma}<\delta.$\\
	
	\noindent We further show that $\dfrac{\alpha(1+\alpha)}{1-\delta}+\alpha\xi(1-\alpha)+\dfrac{\alpha\delta(1-\alpha)}{(1-\delta)^2}-\dfrac{\delta(1-\alpha)}{(1-\delta)^2}<0.$\\
	Clearly, if $\alpha=0$ and $0<\delta<1,$ the result follows immediately. Suppose $\alpha\neq0,$ that is, $\alpha\in(0,1)$ and $0<\delta<1,$ then, from Assumption \ref{Assumconditions}, we have that
	$$\dfrac{\alpha(1+\alpha)+2\alpha\xi+1-\sqrt{\alpha^4+2\alpha^3+3\alpha^2+4\alpha\xi+2\alpha+1}}{2\alpha\xi}<\delta.$$
	That is,
	\begin{eqnarray}\label{24}
		\left[\delta-\left(\dfrac{\alpha(1+\alpha)+2\alpha\xi+1-\sqrt{\Big(\alpha(1+\alpha)+2\alpha\xi+1\Big)^2-4\alpha\xi\Big(\alpha(1+\alpha+\xi)\Big)}}{2\alpha\xi}\right)\right]>0.
	\end{eqnarray}
	Obviously,
	$$\dfrac{\alpha(1+\alpha)+2\alpha\xi+1+\sqrt{\alpha^4+2\alpha^3+3\alpha^2+4\alpha\xi+2\alpha+1}}{2\alpha\xi}>1>\delta.$$
	So,
	\begin{eqnarray}\label{25}
		\left[\delta-\left(\dfrac{\alpha(1+\alpha)+2\alpha\xi+1+\sqrt{\Big(\alpha(1+\alpha)+2\alpha\xi+1\Big)^2-4\alpha\xi\Big(\alpha(1+\alpha+\xi)\Big)}}{2\alpha\xi}\right)\right]<0.
	\end{eqnarray}
	When we multiply \eqref{24} by \eqref{25}, and group the terms together, we have
	\begin{eqnarray*}
		\alpha\xi\delta^2-\delta\Big(\alpha(1+\alpha)+2\alpha\xi+1\Big)+\alpha(1+\alpha+\xi)<0.
	\end{eqnarray*}
	Or equivalently
	\begin{eqnarray*}
		\alpha(1+\alpha)-\alpha\delta(1+\alpha)+\alpha\xi(1-2\delta+\delta^2)+\alpha\delta-\delta+\alpha\delta<0.
	\end{eqnarray*}
	This implies that 
	\begin{eqnarray*}
		(1-\delta)\Big(\alpha(1+\alpha)\Big)+\alpha\xi(1-\delta)^2+\alpha\delta-\delta(1-\alpha)<0.
	\end{eqnarray*}
	This further implies that
	\begin{eqnarray*}
		\dfrac{\alpha(1+\alpha)}{1-\delta}+\alpha\xi+\dfrac{\alpha\delta}{(1-\delta)^2}-\dfrac{\delta(1-\alpha)}{(1-\delta)^2}<0,
	\end{eqnarray*}
	and since $\alpha\in(0,1),$ the claim follows immediately. Consequently, it follows from \eqref{23} that $\Psi_n$ is a monotone non-increasing sequence, hence, $\underset{n\to\infty}{\lim}\Psi_n$ exists. Therefore, $\Psi_n$ is bounded. Accordingly, we have from \eqref{23} that
	\begin{eqnarray}\label{26}
		\underset{n\to\infty}{\lim}\|x_{n+1}-s_n\| = 0,
	\end{eqnarray}
	and
	\begin{eqnarray}\label{27}
		\underset{n\to\infty}{\lim}\|s_n-s_{n-1}\| = 0.
	\end{eqnarray}
	Observe from \eqref{23a} that $\dfrac{1}{1-\delta}\left(1-\alpha\Big(1+\dfrac{1}{\sigma}\Big)\right)\|s_n-\bar{r}\|^2\leq\Psi_n.$ But $\{\Psi_n\}$ is a bounded sequence, thus, $\{\|s_n-\bar{r}\|^2\}$ is bounded. Hence, the sequence $\{s_n\}$ is bounded. Consequently, $\{w_n\}$ is also bounded, since $w_n = s_n+\alpha(s_n-s_{n-1}).$ Again, since $\{s_n\}$ and $\{w_n\}$ are bounded, it follows that $\{x_n\}$ is also bounded.
\end{proof}
\begin{theorem}\label{Theo pox contr}
	Let Assumptions \ref{ASSUMproxcon} and \ref{Assumconditions} be fulfilled. Then the iterative sequence $\{x_n\}$ generated by Algorithm \ref{ALG proxcon} converges weakly to a point in $\Delta(\mathcal{T}; g).$
\end{theorem}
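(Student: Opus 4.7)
The plan is to verify the two hypotheses of Opial's Lemma (Lemma~\ref{Opial Lemma}) with the target set taken to be $\Delta(\mathcal{T};g)$. Fix $\bar r\in\Delta(\mathcal{T};g)$.

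For Opial condition~(i), the existence of $\lim_n\|x_n-\bar r\|$, I would exploit the Lyapunov quantity
\begin{equation*}
\Psi_n \;=\; \tfrac{1}{1-\delta}\|s_n-\bar r\|^2-\tfrac{\alpha}{1-\delta}\|s_{n-1}-\bar r\|^2+\tfrac{\delta(1-\alpha)}{(1-\delta)^2}\|s_n-s_{n-1}\|^2
\end{equation*}
introduced in the boundedness lemma, where it was already shown to be nonnegative and monotone non-increasing, so $\lim_n\Psi_n$ exists, and where \eqref{27} gave $\|s_n-s_{n-1}\|\to 0$. It follows that $\lim_n\bigl(\|s_n-\bar r\|^2-\alpha\|s_{n-1}-\bar r\|^2\bigr)$ exists. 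Since $\{\|s_n-\bar r\|^2\}$ is bounded and $\alpha\in[0,1)$, a standard recursion lemma (if $L=\lim(a_n-\alpha a_{n-1})$ exists for a bounded real sequence $\{a_n\}$, then $a_n\to L/(1-\alpha)$) delivers $\lim_n\|s_n-\bar r\|$; combining with $\|x_{n+1}-s_n\|\to 0$ from \eqref{26} produces $\lim_n\|x_n-\bar r\|$, which is Opial~(i).

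For Opial condition~(ii), I would first establish the two vanishing residuals $\|x_{n+1}-w_n\|\to 0$ and $\|w_n-y_n\|\to 0$. The former follows from $w_n-s_n=\alpha(s_n-s_{n-1})\to 0$ combined with \eqref{26}, and the latter is then supplied by \eqref{LMPB} because $\lambda_n\to\lambda^*>0$ keeps the factor $(1+\mu\lambda_n/\lambda_{n+1})/(1-\mu\lambda_n/\lambda_{n+1})$ bounded (using $\mu<1$). Let $x^*$ be a weak cluster point of $\{x_n\}$, say $x_{n_k}\rightharpoonup x^*$. The residual estimates (after an index shift) give $w_{n_k-1}\rightharpoonup x^*$ and $y_{n_k-1}\rightharpoonup x^*$, with $x^*\in\mathcal{C}$ by weak closedness of $\mathcal{C}$. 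Applying Lemma~\ref{prox lemma} to the definition of $y_n$ yields, for every $v\in\mathcal{C}$,
\begin{equation*}
g(v)+\langle\mathcal{T}w_n,v-y_n\rangle \;\geq\; g(y_n)+\tfrac{1}{\lambda_n}\langle w_n-y_n,v-y_n\rangle.
\end{equation*}
Taking $\liminf$ along the subsequence and invoking (B$_2$) sequential weak continuity of $\mathcal{T}$, (B$_3$) weak lower semicontinuity of $g$, $\|w_n-y_n\|\to 0$ and $\lambda_n\to\lambda^*>0$, I arrive at $\langle\mathcal{T}x^*,v-x^*\rangle+g(v)-g(x^*)\geq 0$ for every $v\in\mathcal{C}$, i.e.\ $x^*\in\Delta(\mathcal{T};g)$. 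Opial's lemma then produces a single weak limit in $\Delta(\mathcal{T};g)$.

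The step I expect to be the main obstacle is the passage from $\lim\Psi_n$ to $\lim\|x_n-\bar r\|$ in~(i): the Lyapunov deliberately mixes $\|s_n-\bar r\|^2$ with $\alpha\|s_{n-1}-\bar r\|^2$ and a consecutive-difference correction, so recovering a Fej\'er-type limit for the original iterate requires the auxiliary recursion lemma on bounded $(a_n-\alpha a_{n-1})$-convergent sequences together with a careful transfer from $\|s_n-\bar r\|$ back to $\|x_n-\bar r\|$ via \eqref{26}, both of which must respect the inertial and correction parameter balances fixed by Assumption~\ref{Assumconditions}.
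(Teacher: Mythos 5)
Your treatment of Opial condition (i) is essentially sound and close to the paper's: the paper avoids your auxiliary recursion lemma by expanding $\|s_{n-1}-\bar r\|^2=\|s_n-\bar r\|^2+2\langle s_{n-1}-s_n,\,s_n-\bar r\rangle+\|s_{n-1}-s_n\|^2$ inside $\Psi_n$, so that $\tfrac{1-\alpha}{1-\delta}\|s_n-\bar r\|^2$ equals $\Psi_n$ plus terms that vanish by \eqref{27}, whence $\lim_n\|s_n-\bar r\|$ exists directly. Your alternative (the lemma that a bounded real sequence with $a_n-\alpha a_{n-1}\to L$ and $\alpha\in[0,1)$ satisfies $a_n\to L/(1-\alpha)$) is a correct statement and achieves the same conclusion, and the transfer to $\|x_n-\bar r\|$ via \eqref{26} matches the paper.

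The genuine gap is in your limit passage for Opial condition (ii). From the proximal inequality you write $g(v)+\langle\mathcal{T}w_{n_k},v-y_{n_k}\rangle\geq g(y_{n_k})+\tfrac{1}{\lambda_{n_k}}\langle w_{n_k}-y_{n_k},v-y_{n_k}\rangle$ and propose to take the liminf, invoking sequential weak continuity of $\mathcal{T}$, to land directly on $\langle\mathcal{T}x^*,v-x^*\rangle+g(v)-g(x^*)\geq0$. But $\langle\mathcal{T}w_{n_k},v-y_{n_k}\rangle$ is an inner product of two sequences that converge only \emph{weakly} ($\mathcal{T}w_{n_k}\rightharpoonup\mathcal{T}x^*$ and $v-y_{n_k}\rightharpoonup v-x^*$), and such a pairing need not converge to the pairing of the weak limits in an infinite-dimensional Hilbert space (in $\ell^2$, $\langle e_k,-e_k\rangle=-1$ while both factors tend weakly to $0$). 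This is exactly the point at which the paper uses the monotonicity in (B$_4$): it bounds $\langle\mathcal{T}w_{n_k},z-w_{n_k}\rangle\leq\langle\mathcal{T}z,z-w_{n_k}\rangle$ with $z$ a \emph{fixed} test point, so the limit passage is legitimate but yields only the Minty-type inequality $\langle\mathcal{T}z,z-x^*\rangle+g(z)-g(x^*)\geq0$; the Stampacchia form is then recovered by testing with $z_t=tu+(1-t)x^*$, using convexity of $g$ and continuity of $\mathcal{T}$ as $t\to0$. Your proposal omits both the use of monotonicity and this Minty-to-Stampacchia conversion, and without them the step fails; correspondingly, your identification of the recursion step in (i) as the main obstacle is misplaced — that part is fine, while the cluster-point characterization is where the argument needs repair.
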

\begin{proof}\noindent\\
	Recall that $w_n = s_n+\alpha(s_n-s_{n-1}),$  therefore, using \eqref{27}, we get
	$$\underset{n\to\infty}{\lim}\|w_n-s_n\| = \alpha\underset{n\to\infty}{\lim}\|s_n-s_{n-1}\| = 0.$$
	Again, using \eqref{26} and the last equation, we obtain
	\begin{eqnarray}\label{28}
		\|x_{n+1}-w_n\|\leq\|x_{n+1}-s_n\|+\|s_n-w_n\|\longrightarrow 0, \ \ n\to\infty.
	\end{eqnarray}
	Given \eqref{LMPB} and \eqref{28}, and that $\underset{n\to\infty}{\lim}\lambda_n$ exists, we can deduce that
	$$\|w_n-y_n\|\longrightarrow0, \ \ n\to\infty.$$
	
	\noindent Let $r_\omega\in\mathcal{H}$ be a weak sequential cluster point of $\{x_n\}.$ We show that $r_\omega\in\Delta(\mathcal{T}; g).$ Due to the boundedness of $\{x_n\},$ there is a subsequence $\{x_{n_k}\}$ of $\{x_n\}$ such that $x_{n_k}\rightharpoonup r_\omega$ as $k\to\infty.$ Also, it follows that $\{w_{n_k}\}, \ \{y_{n_k}\}$ and $\{s_{n_k}\}$ converge weakly to $r_\omega.$ But $y_{n_k}\in\mathcal{C}$ and $\mathcal{C}$ is weakly closed, therefore, $r_\omega\in\mathcal{C}.$\\
	
	\noindent Let $z\in\mathcal{C}$ be an arbitrarily fixed point. Then we obtain from \eqref{5prox} that
	\begin{eqnarray*}
		\lambda_{n_k}\Big(g(y_{n_k})-g(z)\Big)\leq\langle y_{n_k}-z, w_{n_k}-y_{n_k}-\lambda_{n_k}\mathcal{T}w_{n_k}\rangle.
	\end{eqnarray*}
	Or equivalently
	\begin{eqnarray}\label{29}
		\lambda_{n_k}\Big(g(y_{n_k})-g(z)\Big)&\leq&\langle y_{n_k}-w_{n_k}+\lambda_{n_k}\mathcal{T}w_{n_k} , z-y_{n_k}\rangle\nonumber\\
		&=& \langle y_{n_k}-w_{n_k}, z-y_{n_k}\rangle+\lambda_{n_k}\langle\mathcal{T}w_{n_k}, z-w_{n_k}\rangle\nonumber\\
		&&\; +\; \lambda_{n_k}\langle \mathcal{T}w_{n_k}, w_{n_k}-y_{n_k}\rangle\nonumber\\
		&\leq&  \langle y_{n_k}-w_{n_k}, z-y_{n_k}\rangle + \lambda_{n_k}\langle\mathcal{T}z, z-w_{n_k}\rangle +  \lambda_{n_k}\langle \mathcal{T}w_{n_k}, w_{n_k}-y_{n_k}\rangle,
	\end{eqnarray}
	where we obtained the last inequality using the fact that $\mathcal{T}$ is monotone ($\mathcal{T}$ satisfies the generalized monotonicity assumption). Letting $k\to+\infty$ in \eqref{29}, observing that $\|y_{n_k}-w_{n_k}\|\longrightarrow0,$ items (B$_2$) and (B$_3$) of Assumption \ref{ASSUMproxcon} hold, and $\underset{n\to\infty}{\lim}\lambda_n = \lambda^*>0,$ we obtain
	\begin{eqnarray}\label{30}
		\langle \mathcal{T}z, z-r_\omega\rangle + g(z)-g(r_\omega)\geq0.
	\end{eqnarray}
	Let $u\in\mathcal{C}$ be arbitrarily chosen. Then, for every $t\in(0,1), \ r_\omega\in\mathcal{C},$ define $z_t := tu+(1-t)r_\omega.$ Thus, $z_t\in\mathcal{C}$ since $\mathcal{C}$ is convex. Replacing $z$ in \eqref{30} by $z_t,$ we obtain 
		\begin{eqnarray*}
		\langle \mathcal{T}z_t, z_t-r_\omega\rangle + g(z_t)-g(r_\omega)\geq0.
	\end{eqnarray*}
	Hence, by the convexity of $g,$ we get
	\begin{eqnarray}\label{31}
		\langle \mathcal{T}z_t, u-r_\omega\rangle + g(u)-g(r_\omega)\geq0.
	\end{eqnarray}
	Taking limit as $t\longrightarrow 0$ in \eqref{31} and using the fact that $\mathcal{T}$ is sequentially weakly continuous, we get
	\begin{eqnarray*}
		\langle \mathcal{T}r_\omega, u-r_\omega\rangle + g(u)-g(r_\omega)\geq0.
	\end{eqnarray*}
	And because $u\in\mathcal{C}$ is arbitrarily chosen, we have that $r_\omega\in\Delta(\mathcal{T}; g).$\\
	
	\noindent Next, we show that $\underset{n\to\infty}{\lim}\|x_n-r_\omega\|$ exists for any $r_\omega\in\Delta(\mathcal{T}; g).$\\
	Let us define 
	\begin{eqnarray*}
		\ell_n &:=& 2\langle s_{n-1}-s_n, s_n-r_\omega\rangle+ \|s_{n-1}-s_n\|^2,\\
	m_n &:=& -\dfrac{\delta(1-\alpha)}{(1-\delta)^2}\|s_n-s_{n-1}\|^2.
	\end{eqnarray*}
	Observe from Lemma \ref{LM con1} that
	\begin{eqnarray*}
		\|s_{n-1}-r_\omega\|^2 = \|s_{n-1}-s_n\|^2+2\langle s_{n-1}-s_n, s_n-r_\omega\rangle+\|s_n-r_\omega\|^2.
	\end{eqnarray*}
	Thus, it follows that
	\begin{eqnarray}\label{32}
		\dfrac{1-\alpha}{1-\delta}\|s_n-r_\omega\|^2 = \Psi_n+ \ell_n+\dfrac{\alpha}{1-\delta}m_n.
	\end{eqnarray}
	But $\underset{n\to\infty}{\lim}\|s_{n-1}-s_n\| = 0,$ and $\{s_n\}$ is bounded, hence,  $\underset{n\to\infty}{\lim}\ell_n =  \underset{n\to\infty}{\lim} m_n =0.$\\
	Recall that  $\underset{n\to\infty}{\lim}\Psi_n$ exists, it follows from \eqref{32} that  $$\underset{n\to\infty}{\lim}\|s_n-r_\omega\| \  \mbox{exists for any} \ r_\omega\in\Delta(\mathcal{T}; g).$$
	So, we obtain from 
	\begin{eqnarray*}
		\|x_n-r_\omega\|^2 = \|x_n-s_n\|^2 + 2\langle x_n-s_n, s_n-r_\omega\rangle + \|s_n -r_\omega\|^2
	\end{eqnarray*}
	that  $\underset{n\to\infty}{\lim}\|x_n-r_\omega\|$ exists for each $r_\omega\in\Delta(\mathcal{T}; g).$ Hence, we conclude from Lemma \ref{Opial Lemma} that $\{x_n\}$ converges weakly to an element in $\Delta(\mathcal{T}; g).$
\end{proof}

\section{Numerical Examples}\label{numerics}
\noindent In this section, we show the numerical results of our Algorithm \ref{ALG proxcon}. In particular, we use the following examples to compare the performance of our Algorithm \ref{ALG proxcon} (denoted here as Alg \ref{ALG proxcon}) with the methods studied by Kim \cite{Kim} (denoted here as Alg Kim), Maingé \cite{Maingé} (denoted here as Alg Main), Dong \textit{et al.} \cite{Dong Cho} (denoted here as Alg Don) and Jolaoso \textit{et al. }\cite{JOLAOSO} (denoted here as Alg Jol). 

\begin{example} \label{EX1} \cite[Example 5.2]{JOLAOSO}
	\noindent Let $A = [0,1]\times[0,1]\subset\mathbb{R}^2$ and $\mathcal{C} = \{x\in\mathbb{R}^3 ~ \vert~ Mx+d\in A\},$ where
	$$M = \begin{bmatrix}
		1 & 2 & 1\\
		1 & 1 & 1
	\end{bmatrix} \ \ \mbox{and}\ d = \begin{bmatrix}
	1/2\\
	1/2
	\end{bmatrix}.$$ 
	Define $\mathcal{T}(x):= M^TG(Mx+d),$ with
	\begin{eqnarray*}
		G(x)= \begin{cases}
			\left(\dfrac{-t}{1+t}, \dfrac{-1}{1+t}\right), \ \ \ (x_1, x_2)\neq 0,\\ \\
			(0, -1), \ \ \ \ \ \ \ \ \ \ \ \ \ \ \ (x_1, x_2) = 0,
		\end{cases}
	\end{eqnarray*}
	and $$t = \dfrac{x_1+\sqrt{x_1^2+4x_2}}{2}, \ \ x_1, x_2\in\mathbb{R}.$$
	Take $g = \delta_\mathcal{C}.$
\end{example}

\hfill

\begin{example} \cite[Example 5.3]{JOLAOSO}\label{Ex2}
	\noindent Let $\mathcal{C} = [3,5]\times[3,5],$
	\begin{eqnarray*}
		g((x_1, x_2)) = \begin{cases}
			(x_1^2, x_2^2), \ \ \ \ (x_1, x_2)\in\mathcal{C},\\ \\
			+\infty, \ \ \ \ \ \ \ \  (x_1, x_2) \notin \mathcal{C},
		\end{cases}
	\end{eqnarray*}
	and $\mathcal{T}(x) := (4-x_1, 4-x_2),$ for $(x_1, x_2)\in\mathbb{R}^2.$ Then $\mathcal{T}$ is $g-$pseudomonotone and $\Delta(\mathcal{T}; g) = \{(3,3)\}.$
\end{example}

\begin{example} \cite[Example 5.1]{Izuchukwu}\label{Ex3}
	Let $B, D\in\mathbb{R}^{n\times n}$ be symmetric and positive definite matrices. For MVI\eqref{MVI non}, let $g(x) = x^TBx$ and $\langle \mathcal{T}\bar{x}, x-\bar{x}\rangle = x^TD(x-\bar{x}).$ Let the largest eigenvalue of $D$ and the smallest eigenvalue of $B$ be denoted respectively by $\rho(D)$ and $\eta(B).$ Then both $\rho(D)$ and $\eta(B)$ are positive since $D$ and $B$ are positive definite.
	
	 %We use this problem to conduct a numerical comparison of Algorithm \ref{ALG proxcon} with the algorithms of Kim \cite{Kim} and Maing$\acute{e}$ \cite{Maingé}.
\end{example}
\noindent  During the computations, we randomly generate the starting points $w_{-1}, w_{-2}, x_0, x_{-1}$ for $n=20,50,100.$ We choose our control parameters as follows:
\begin{itemize}
	\item Alg \ref{ALG proxcon}: $\alpha = 0.5,$ $\delta = 0.9,$ $\lambda = \dfrac{0.99}{2\rho(B)},$ $ \gamma = 1.5,$ $\theta = 0.4,$ $\xi = \dfrac{7}{3},$ and $\sigma = 1.5.$
	\item  Alg Main: $a = 1,$ $c= 2,$ $b= 0.5,$ $a_1 = 0.5,$ $a_2 = 0.9,$ $\bar{c} = 1,$ and $ \lambda = \dfrac{0.99}{2\rho(B)}.$
	\item  Alg Kim: $ \lambda = \dfrac{0.99}{2\rho(B)}.$
	\item  Alg Don: $\alpha_n = 0.3-\dfrac{1}{5(n+1)^2},$ $\tau = \dfrac{0.99}{2\rho(B)},$ $\gamma = \dfrac{58}{477},$ $\delta = 0.9,$ $\alpha = 0.4,$ and $\sigma = 0.2.$ 
	\item  Alg Jol: $\theta = 0.3,$ $\gamma= 0.5,$ and $\sigma = 0.5.$
\end{itemize}
All computations are performed using MATLAB R2023b, which runs on a personal computer featuring an Intel(R) Core(TM) i5-10210U CPU at 2.11 GHz and 8.00 GB of RAM. Throughout the experiments, we define $TOL_n=\|x_{n+1}-x_n\|$. Then, using the stopping criterion $TOL_n<\epsilon$ for the iteration methods, where $\epsilon$ is the predetermined error. It is worthy to note that $TOL_n=0$ meaning that $x_n$ is a solution of MVI \eqref{MVI non}.\\

\hfill

\noindent By using Example \ref{EX1} and the control parameters earlier provided, we obtain numerical simulations across various dimensions. The comprehensive results from these simulations are systematically compiled and presented in Table \ref{tb1}, and the corresponding visual representation of these findings is displayed in Figure \ref{fig1}, as shown below.
 %Using Example \ref{EX1}, we obtain Table \ref{tb1} and Figure \ref{fig1} as follows:
	\begin{table}[H]
	\centering
	\caption{Results of the Numerical Simulations for Different Dimensions}\label{tb1}
	\resizebox{\columnwidth}{!}{\begin{tabular}{|c |cc| cc| cc | cc |cc|}\hline
			\multicolumn{11}{|c|}{ Numerical Results for $n=20, 50$ and $100$ in Example \ref{EX1}}\\ \hline
			& \multicolumn{2}{|c|}{Alg 3.3 }  & \multicolumn{2}{|c|} { Alg Kim }  & \multicolumn{2}{|c} { Alg Jol}   & \multicolumn{2}{|c|} { Alg Main}&\multicolumn{2}{|c|} { Alg Don}    \\\hline
			n & Iter  &  CPU time (sec.) & Iter  &  CPU time (sec.) & Iter  &  CPU time (sec.) & Iter  &  CPU time (sec.) & Iter  &  CPU time (sec.) \\
			20 & 90  & 0.0525 & 120 & 0.0762 & 155& 0.0941 & 270& 0.3418 & 290& 0.6418 \\
			50 & 78 & 0.0428 & 85 & 0.0687 & 139 & 0.0898  & 150 & 0.0759& 200& 0.0818 \\
			100 & 89 & 0.0555 & 101 & 0.0691 & 145 & 0.0871  & 170 & 0.0932& 250& 0.1418\\ 
			\hline
	\end{tabular}}
\end{table}

\hfill

\begin{figure}[H] 
	\includegraphics[{width=7cm, height=7.5cm}]{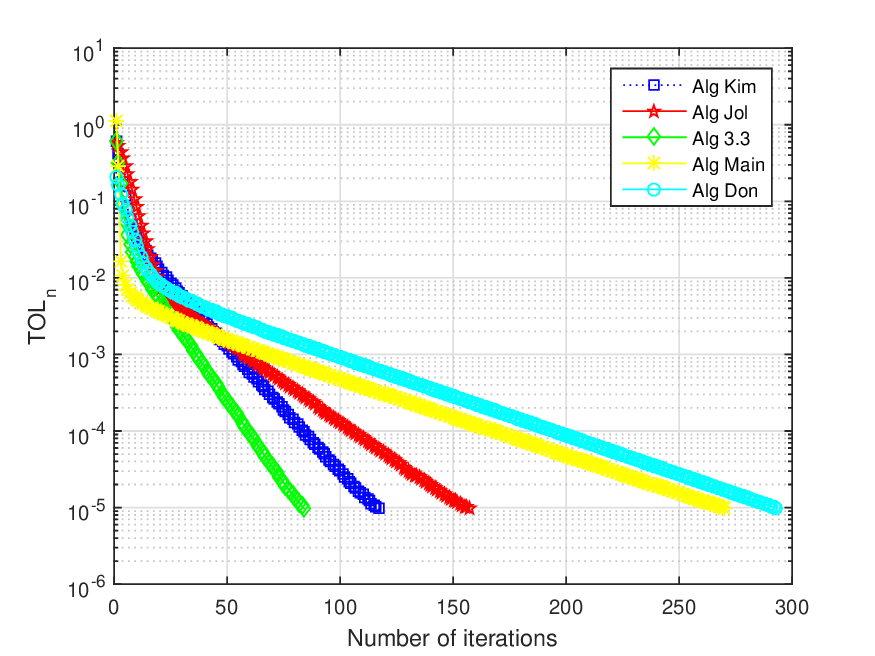}  
	\includegraphics[{width=7cm, height=7.5cm}]{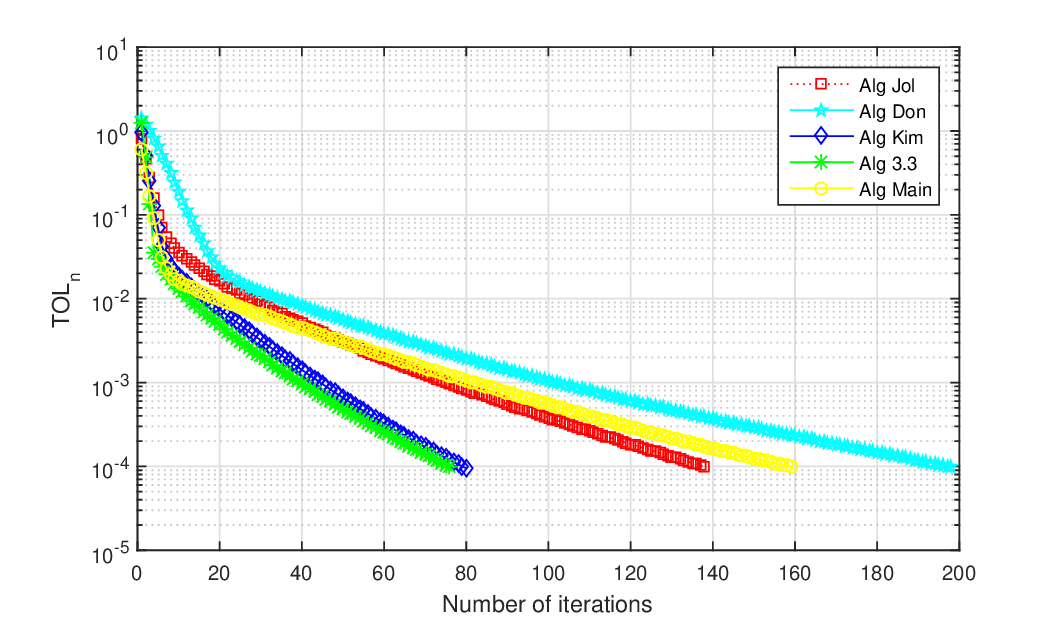} 
	\includegraphics[{width=7cm, height=7.5cm}]{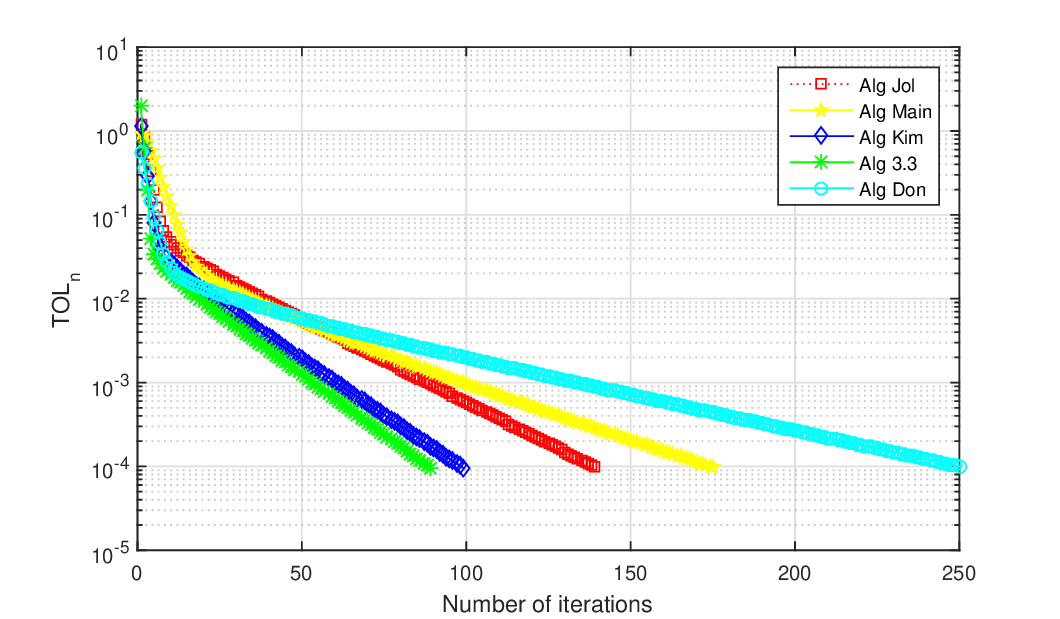} 
	\caption{Graph of the  Iterates for Example \ref{EX1} when the Dimensions $n=20 $, $n=50$ and $n=100$ }\label{fig1}
\end{figure}

\noindent Based on Example \ref{Ex2} and the specified control parameters, numerical simulations were carried out for different dimensions. The resulting comparative data is compiled in Table \ref{tb2}, and the visual findings are simultaneously presented in Figure \ref{fig2} below.

	\begin{table}[H]
	\centering
	\caption{Results of the Numerical Simulations for Different Dimensions}\label{tb2}
	\resizebox{\columnwidth}{!}{\begin{tabular}{|c |cc| cc| cc | cc |cc|}\hline
			\multicolumn{11}{|c|}{ Numerical Results for $n=20, 50$ and $100$ in Example \ref{Ex2}}\\ \hline
			& \multicolumn{2}{|c|}{Alg 3.3 }  & \multicolumn{2}{|c|} { Alg Kim }  & \multicolumn{2}{|c} { Alg Jol}   & \multicolumn{2}{|c|} { Alg Main}&\multicolumn{2}{|c|} { Alg Don}    \\\hline
			n & Iter  &  CPU time (sec.) & Iter  &  CPU time (sec.) & Iter  &  CPU time (sec.) & Iter  &  CPU time (sec.) & Iter  &  CPU time (sec.) \\
			20 & 160  & 0.1623 & 190 & 0.2468 & 255& 0.4944 & 400& 0.5419 & 1030& 1.4418 \\
			50 & 75 & 0.0627 & 100 & 0.0887 & 220 & 0.1898  & 400 & 0.2759& 1400& 1.5818 \\
			100 & 140 & 0.0855 & 220 & 0.1691 & 300 & 0.2873  & 353 & 0.3972& 450& 0.6418\\ 
			\hline
	\end{tabular}}
\end{table}
\begin{figure}[H] 
	\includegraphics[{width=7cm, height=7.5cm}]{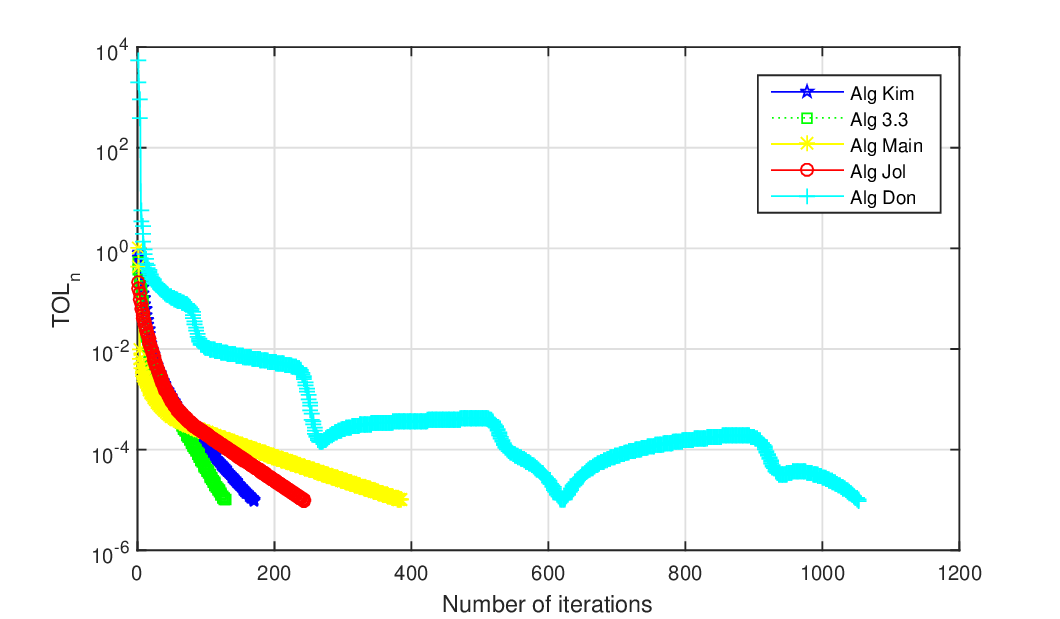}  
	\includegraphics[{width=7cm, height=7.5cm}]{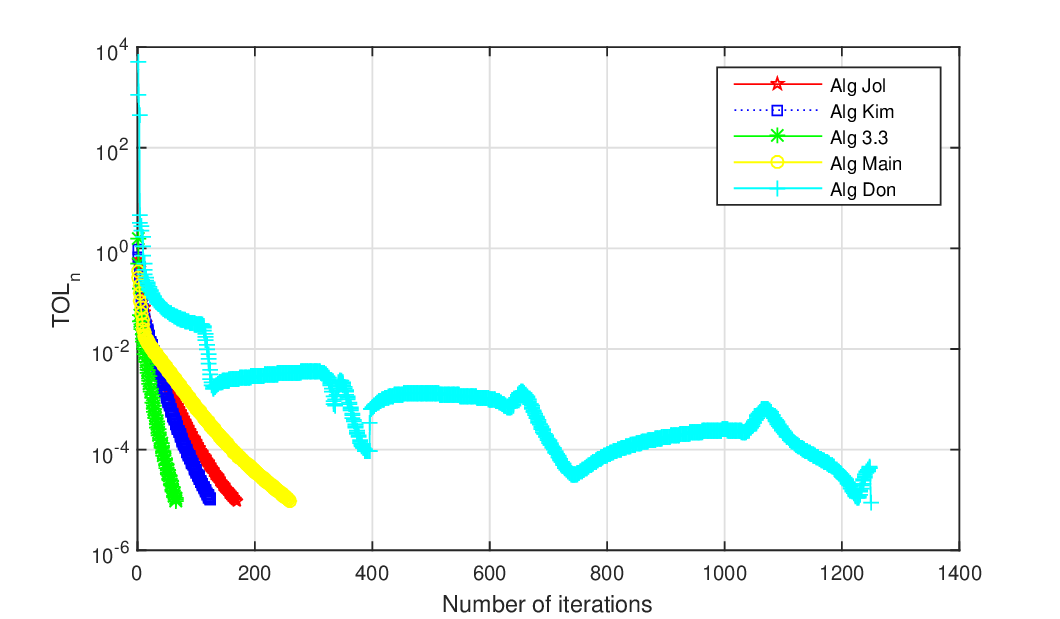} 
	\includegraphics[{width=7cm, height=7.5cm}]{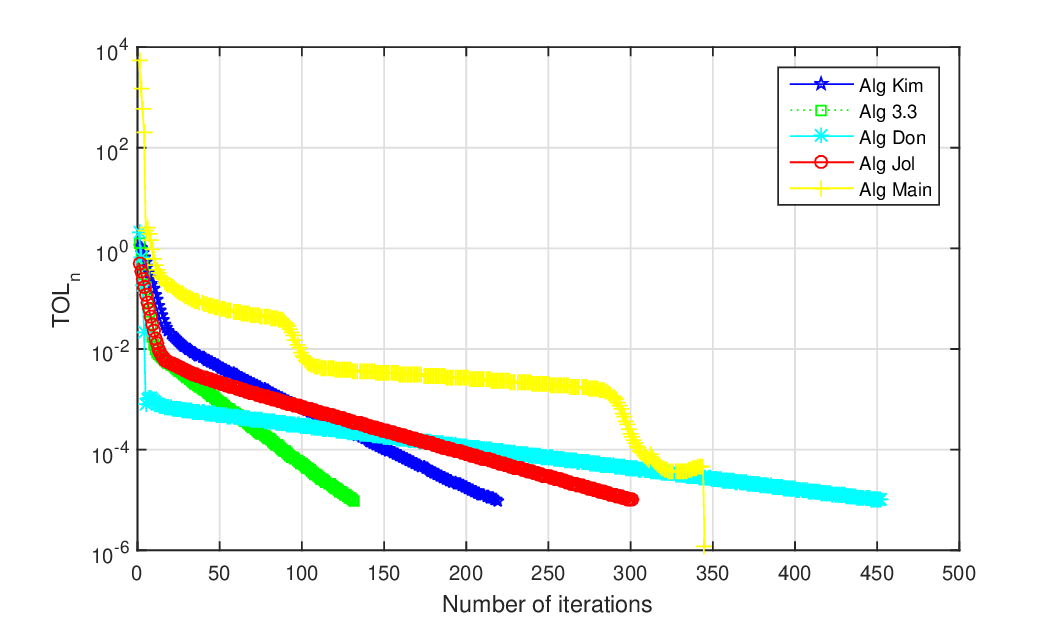} 
	\caption{Graph of the  Iterates for Example \ref{Ex2} when the Dimensions $n=20 $, $n=50$ and $n=100$}\label{fig2}
\end{figure}

\noindent In accordance with Example \ref{Ex3} and the previously defined control parameters, the numerical simulations were performed for various dimensions. The derived comparative results are compiled within Table \ref{tb3}, and the corresponding visual data is concurrently presented in Figure \ref{fig3} as follows:

		\begin{table}[H]
		\centering
		\caption{Results of the Numerical Simulations for Different Dimensions}\label{tb3}
		\resizebox{\columnwidth}{!}{\begin{tabular}{|c |cc| cc| cc | cc |cc|}\hline
				\multicolumn{11}{|c|}{ Numerical Results for $n=20, 50$ and $100$ in Example \ref{Ex3}}\\ \hline
				& \multicolumn{2}{|c|}{Alg 3.3 }  & \multicolumn{2}{|c|} { Alg Kim }  & \multicolumn{2}{|c} { Alg Jol}   & \multicolumn{2}{|c|} { Alg Main}&\multicolumn{2}{|c|} { Alg Don}    \\\hline
				n & Iter  &  CPU time (sec.) & Iter  &  CPU time (sec.) & Iter  &  CPU time (sec.) & Iter  &  CPU time (sec.) & Iter  &  CPU time (sec.) \\
				20 & 35  & 0.0125 & 46 & 0.0362 & 62& 0.0541 & 100& 0.0718 & 110& 0.7918 \\
				50 & 50 & 0.0324 & 55 & 0.0487 & 110 & 0.0596  & 115 & 0.0589& 210& 0.0818 \\
				100 & 47 & 0.0256 & 65 & 0.0399 & 105 & 0.0578  & 115 & 0.0632& 124& 0.0814\\ 
				\hline
		\end{tabular}}
	\end{table}
\begin{figure}[H] 
	\includegraphics[{width=7cm, height=7.5cm}]{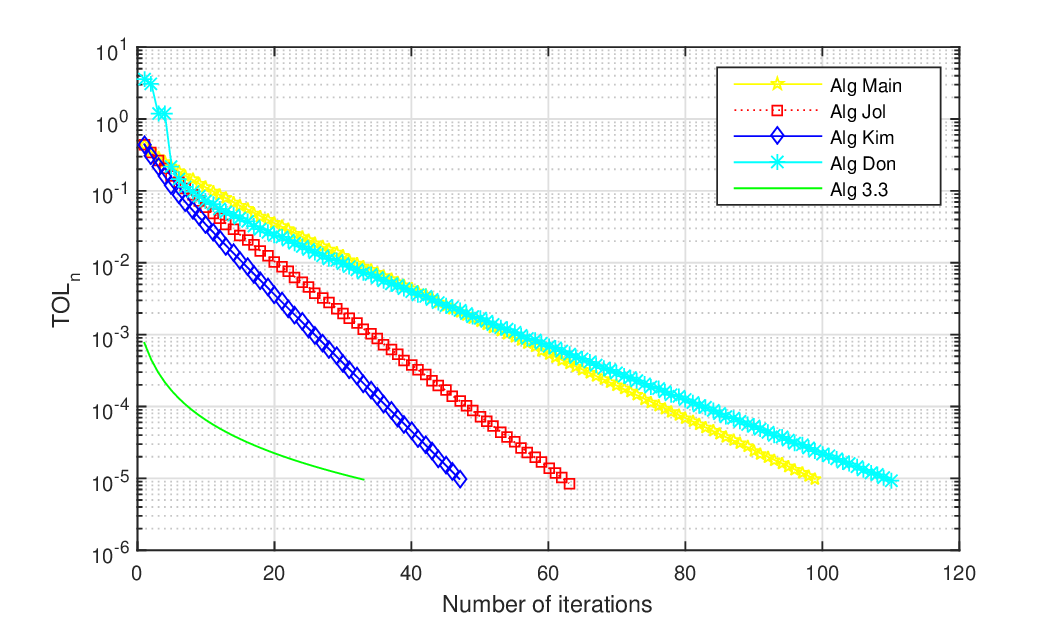}  
	\includegraphics[{width=7cm, height=7.5cm}]{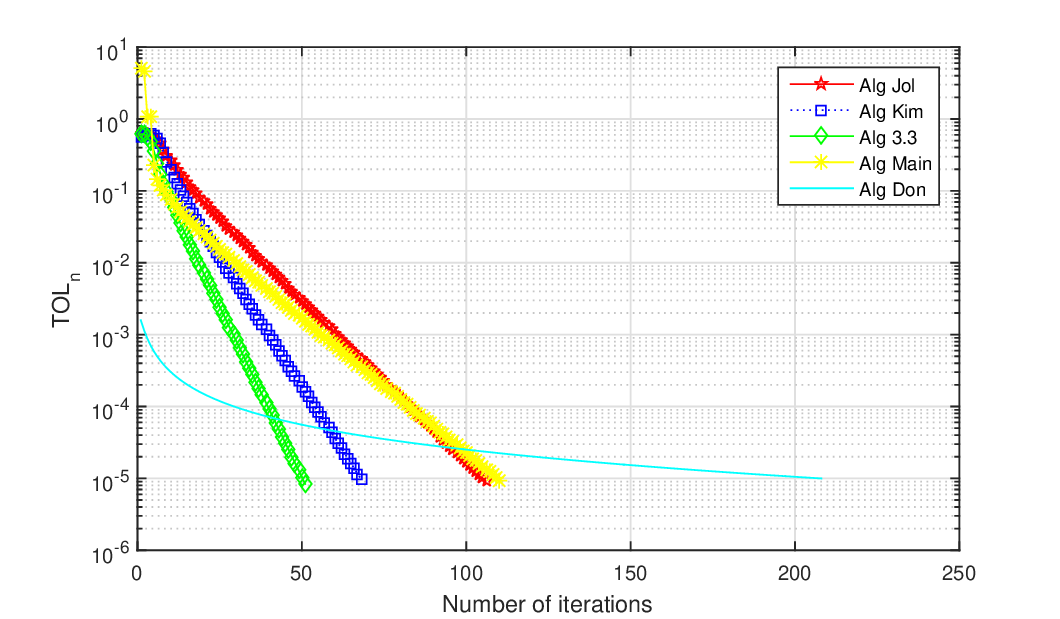} 
	\includegraphics[{width=7cm, height=7.5cm}]{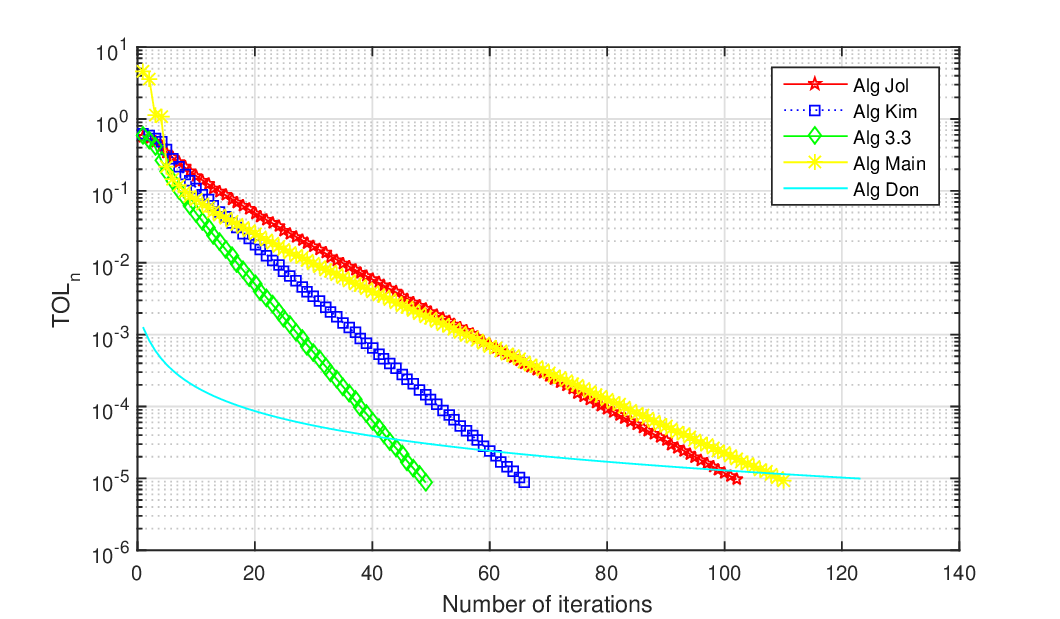} 
	\caption{Graph of the  Iterates for Example \ref{Ex3} when the Dimensions $n=20 $, $n=50$ and $n=100$}\label{fig3}
\end{figure}

	\hfill
	\begin{remark}
		It can be noticed from the numerical results presented above that the incorporation of an inertial term and two correction terms indeed accelerates the proximal and contraction method, thereby outperforming other methods in the literature without such effects. In terms of the number of iterations and CPU time, we can also see that our proposed Algorithm \ref{ALG proxcon} is faster than other methods in \cite{Dong Cho,JOLAOSO,Kim,Maingé}.
	\end{remark}
	
	\hfill
	
	\section{Conclusion}
	\noindent In this paper, we introduced a proximal and contraction method for solving mixed variational inequality problem in a real Hilbert space. Our method combines two correction terms with an inertial term. This is a novel speeding technique for solving MVIPs. Under standard conditions, we obtain a weak convergence result of our proposed method. Different numerical examples presented show that our method has a competitive advantage over other related methods. Thanks to the relaxed inertial and correction terms. Our further research plans to investigate the strong convergence of a three-operator monotone inclusion problem by incorporating relaxed inertial and correction terms.
	
	\section{Declaration}
	\subsection{Acknowledgement:} The authors appreciate the support provided by their institutions.
	
	\subsection{Funding:}  No funding received.
	
	\subsection{Use of AI.:}
	The authors declare that they did not use AI to generate any part of the paper.
	\subsection{Availability of data and material:}  Not applicable.
	\subsection{Competing interests:}
	The authors declare that they have no competing interests.
	\subsection{Authors' contributions:}
	All authors worked equally on the results and approved the final manuscript.
	
	\hfill

\end{document}